\documentclass[11pt, leqno, letterpaper]{amsart}

\usepackage{graphicx}    
\usepackage{bbm, xcolor}

\usepackage[margin=1in]{geometry}

\usepackage[english]{babel}

\usepackage{amsmath,amsthm,amsfonts,amssymb}
\usepackage{algorithmic}
\usepackage{algorithm}

\newtheorem{proposition}{Proposition}[section]

\newtheorem{definition}{Definition}[section]
\newtheorem{remark}{Remark}[section]

\begin{document}

\title[The logistic-normal integral and the moments of the logistic-normal distribution]
{The logistic-normal integral and the moments of the logistic-normal distribution}

\author{Dan Pirjol}
\address{Stevens Institute of Technology, Hoboken, NJ 07030}

\date{August 2026}

\keywords{logistic regression, logit-normal distribution, linear mixed models, recursion relations}

\begin{abstract}
The logistic-normal integral appears in problems of statistical estimation 
for logistic models with Gaussian random effects, and generalized linear mixed models. 
We study the numerical evaluation of this integral and of its derivatives, and give
closed form evaluations at certain points and series expansions for the general case.
There is a continuum of possible series expansions, and 
we single out one series expansion which is optimal 
for numerical evaluation. 
We propose an algorithm for a precise numerical evaluation, 
based on the
optimal series, with good approximation error control in the tails. 
As an application we give explicit results for the 
first two moments of a logistic-normal random variable.
\end{abstract}

\maketitle

\baselineskip18pt

\section{Introduction}

The logistic-normal integral plays an important role in 
estimation problems for logistic models with Gaussian random
effects, mixed linear models, and generalized mixed linear models.
See \cite{Demidenko,Diggle2002,CSLN,Stefanski} for overviews of these
topics.  It can be defined as 
\begin{eqnarray}\label{varphidef}
\varphi(x,t) = \int_{-\infty}^\infty \frac{dy}{\sqrt{2\pi t}}
e^{-\frac{1}{2t}(x-y)^2} \frac{1}{1+e^y} \,.
\end{eqnarray}
The likelihood function for the
logistic model with multivariate Gaussian correlated variables can be reduced
to the evaluation of the integral $\varphi(x,t)$, see \cite{CSLN} and \cite{GSM}. 
Similar integrals appear also
when pricing discounted cash flows in interest rate models with
log-normally distributed rates, see \cite{HWQF} for such an application.

The logistic-normal integral cannot be expressed in closed form, and 
several approximate methods have been proposed for its numerical evaluation, 
see Chapter~7.12 in \cite{Demidenko} and Section~18.4 in \cite{Balakrishnan} 
for surveys. 
A precise determination has been proposed by  \cite{CS} 
using trapezoidal quadrature on a uniform grid. The quadrature error is 
bounded using the method proposed in \cite{Goodwin}, see \cite{TW} for an 
overview of the method. A good approximation 
as a mixture of normal cumulative distributions was proposed by \cite{StefanskiMix} and \cite{MonahanStefanski}.

In a previous paper \cite{DP2} it was noted that the logistic-normal integral is a particular
case of the Mordell integral introduced in \cite{Mordell1933}, which 
has been studied extensively in analytical number theory and in relation to
mock theta functions \cite{Zwegers}. 
We explore further in this paper the implications of the theory of the Mordell
integral for the numerical evaluation of the logistic-normal integral. 
This satisfies several symmetries relations which aid
considerably with its numerical evaluation, leading to closed form evaluations at
certain points and convergent series expansions for the general case.

In Section \ref{sec:2} we summarize the symmetry relations satisfied by the
logistic-normal integral, and its exact evaluations at certain points. 
Section~\ref{sec:3} presents series expansions for the logistic-normal integral,
obtained using the Poisson summation formula. 
In Proposition~\ref{propPoisson} we present two equivalent series expansions,
following from the Poisson summation formula.
Proposition~\ref{prop:gen} shows that there is a continuum of such series expansions, 
indexed by one complex parameter. 

Section~\ref{sec:4} discusses the evaluation of a class of generalizations of the
logistic-normal integral that are related to the derivatives of the logistic-normal integral with respect to its parameters. 
We give explicit results for the exact evaluation of the first few generalized integrals on grids of uniformly spaced points, which can be used to construct interpolations for these functions.
As an application we give in Section~\ref{sec:5} explicit expressions 
for the first four moments of a logistic-normal random variable.

In Section~\ref{sec:6} we present details of implementation of these series and 
numerical tests of their efficiency.
One particular series, given in equation (\ref{PoissonTheta2}),
turns out to be optimally suited for this purpose, and in Appendix \ref{sec:C}
we present an algorithm 
for a precise numerical evaluation of the logistic-normal integral based on it.
We present numerical tests of the
approximation proposed which demonstrate good agreement with benchmark
evaluations based on the trapezoidal quadrature method of \cite{CS}.
The performance of the series expansion is compared with that of the interpolation 
from exactly known values, on the example of the first two moments of a logistic-normal integral.
In order to make the discussion self-contained, we give in 
Appendix A a brief survey of relevant results on the theory of the Mordell integral. 
Appendix B contains the proofs. 

\section{Exact evaluations and a symmetry relation}
\label{sec:2}

It is convenient to work with the function $g(x,t)$ defined as
\begin{eqnarray}\label{gdef}
g(x,t) = \int_{-\infty}^\infty \frac{dy}{\sqrt{2\pi t}}
e^{-\frac{1}{2t}(x-y)^2} \frac{1}{\cosh(y/2)}\,.
\end{eqnarray}
The logistic-normal integral is expressed in terms of this function as
\begin{eqnarray}\label{varphi2g}
\varphi(x,t)=\frac12 e^{-\frac12 x + \frac18 t}  g(x-\frac12 t,t) \,.
\end{eqnarray}

The function $g(x,t)$ satisfies the recursion relation
\begin{eqnarray}\label{Mordell2g}
e^{-\frac12 x} g\left(x-\frac12 t,t\right) + 
e^{\frac12 x}  g\left(x+\frac12 t,t\right) = 
2e^{-\frac18 t}
\end{eqnarray}
which follows from the recursion relation for $\varphi(x,t)$ \cite{DP}
\begin{eqnarray}\label{LNrec}
\varphi(x+t, t) = e^{-x-\frac12 t} (1-\varphi(x,t))\,.
\end{eqnarray}
Taking  $x=0$ in the relation (\ref{Mordell2g}) gives
\begin{eqnarray}\label{gatt2}
&& g\left(\pm\frac12 t, t\right) = e^{-\frac18 t}\,.
\end{eqnarray}

Repeated application of (\ref{Mordell2g}) starting with (\ref{gatt2}) gives exact evaluations of the function $g(x,t)$ on the grid of values $x_k = (k+\frac12)t$ with $k\in \mathbb{Z}$.
This can be expressed equivalently as an exact evaluation of the logistic-normal integral
$\varphi(x_k,t)$ on the grid $x_k=kt$. 

\begin{proposition}\label{prop:2}
For all $k\geq 0$ we have
\begin{align}
\varphi(kt,t) &= e^{-\frac12 k^2 t} 
\sum_{j=0}^{k-2} (-1)^j e^{\frac12(j-k+1)^2t} + \frac12 (-1)^{k-1} e^{-(k-1)^2 t}\,, \quad k\geq 2 \\
&= \frac12 e^{-\frac12 t} \,, \hspace{7.3cm} k = 1 \nonumber \\
& = \frac12 \,,\hspace{8cm} k = 0 \,.\nonumber
\end{align}
The $k<0$ values are obtained from $\varphi(-kt,t) = 1- \varphi(kt,t)$.
\end{proposition}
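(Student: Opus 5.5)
The plan is to derive everything from the recursion (\ref{LNrec}), $\varphi(x+t,t)=e^{-x-\frac12 t}(1-\varphi(x,t))$, combined with the reflection symmetry $\varphi(-x,t)=1-\varphi(x,t)$.

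\emph{Reflection symmetry.} I would prove it first, directly from the definition (\ref{varphidef}). Substitute $y\to -y$ and use $\frac{1}{1+e^{-y}}=1-\frac{1}{1+e^{y}}$ together with the evenness of the Gaussian kernel. This also settles the last sentence of the statement about $k<0$.

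\emph{Base cases.} Setting $x=0$ in the symmetry gives $\varphi(0,t)=\frac12$, which is the case $k=0$. Setting $x=0$ in (\ref{LNrec}) then gives $\varphi(t,t)=e^{-\frac12 t}(1-\frac12)=\frac12 e^{-\frac12 t}$, which is the case $k=1$.

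\emph{General $k$.} I would unroll the recursion $\varphi((i+1)t,t)=e^{-(i+\frac12)t}(1-\varphi(it,t))$ downward from $i=k-1$. After $m$ steps the accumulated prefactor is
\[
\exp\Big(-\sum_{i=k-m}^{k-1}(i+\tfrac12)t\Big)=e^{-\frac12\left(k^2-(k-m)^2\right)t},
\]
with alternating sign $(-1)^{m-1}$. Stopping at $\varphi(t,t)$, which takes $k-1$ steps, and setting $j=m-1$, this gives
\[
\varphi(kt,t)=e^{-\frac12 k^2 t}\sum_{j=0}^{k-2}(-1)^j e^{\frac12 (k-1-j)^2 t}+(-1)^{k-1}e^{-\frac12(k^2-1)t}\cdot\tfrac12 e^{-\frac12 t}.
\]
The sum reproduces the first term of the proposition. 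I would formalize this as an induction on $k$: assume the closed form for $k$, substitute it into $e^{-(k+\frac12)t}(1-\varphi(kt,t))$, and check that the new $j=0$ term $e^{-\frac12(k+1)^2t}e^{\frac12 k^2 t}$ appears while the old terms shift $j\to j+1$ with a sign flip.

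\emph{Main obstacle.} The only delicate point is bookkeeping the exponent of the boundary term. My computation gives $\frac12(-1)^{k-1}e^{-\frac12 k^2 t}$. For example, at $k=2$ it gives $\varphi(2t,t)=e^{-\frac32 t}-\frac12 e^{-2t}$, obtained directly from $\varphi(2t,t)=e^{-\frac32 t}(1-\frac12 e^{-\frac12 t})$. So I would check the exponent $e^{-(k-1)^2t}$ as printed against this direct evaluation. I expect it should read $e^{-\frac12 k^2 t}$, equivalently the overall prefactor $e^{-\frac12 k^2 t}$ multiplying $\frac12(-1)^{k-1}$. I would prove the statement with that form of the boundary term.
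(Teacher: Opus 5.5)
Your proof is correct, and you are right about the boundary term. As printed, the statement fails for every $k\geq 2$ and $t>0$. At $k=2$ it gives $e^{-\frac32 t}-\frac12 e^{-t}$, while (\ref{LNrec}) gives $\varphi(2t,t)=e^{-\frac32 t}-\frac12 e^{-2t}$. The last term should be $\frac12(-1)^{k-1}e^{-\frac12 k^2 t}$.

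The paper's own proof yields exactly this once (\ref{varphi2g}) is applied correctly, i.e.\ $\varphi((n+1)t,t)=\frac12 e^{-\frac12(n+1)t+\frac18 t}\,g((n+\frac12)t,t)$; the proof misprints this prefactor as $\frac12 e^{-\frac12(n+\frac12)t}$. Combined with (\ref{GaussSum10}) it gives
\[
\varphi(kt,t)=(-1)^{k-1}e^{-\frac12 k^2 t}\Big(\sum_{i=0}^{k-1}(-1)^i e^{\frac12 i^2 t}-\frac12\Big),\qquad k\geq 1.
\]
This is your formula after setting $j=k-1-i$ and merging the $i=0$ term with the $-\frac12$.

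Your route differs from the paper's. The paper does not iterate the recursion. It integrates the finite alternating geometric sum $\sum_{i=0}^{n}(-1)^i e^{ix}=\frac{1-(-1)^{n+1}e^{(n+1)x}}{1+e^{x}}$ against the $N(0,t)$ density. This produces the partial theta-type sum $\sum_{i=0}^{n}(-1)^i e^{\frac12 i^2 t}$ in one step, written in terms of $g(-\frac12 t,t)=e^{-\frac18 t}$ and $g((n+\frac12)t,t)$, and then converts back to $\varphi$. In effect this is your recursion telescoped pointwise before the Gaussian average is taken. No induction is needed and the closed form appears at once, but the argument detours through $g$ and the conversion factor.

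Your induction works directly with $\varphi$ and uses only (\ref{LNrec}) and the reflection symmetry; the inductive step is a two-line check. It is also the argument that carries over to $\varphi_1,\varphi_2,\varphi_3$ through (\ref{varphijrec}), with the constant $1$ replaced by $f_j$, as the paper remarks before Proposition~\ref{prop:phi1}.

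To make your proof fully self-contained, add the one-line derivation of (\ref{LNrec}): $e^{-\frac{1}{2t}(x+t-y)^2}=e^{-x-\frac12 t}\,e^{y}\,e^{-\frac{1}{2t}(x-y)^2}$, together with $\frac{e^{y}}{1+e^{y}}=1-\frac{1}{1+e^{y}}$.
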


\begin{proof}
The proof is given in Appendix~\ref{sec:app.B}.
\end{proof}



We give next another symmetry relation of the function $g(x,t)$ which
relates its values for real and imaginary values of $x$ at different $t$.
This will be useful in order to express the series expansions for $g(x,t)$
in an alternative form.

\begin{proposition}\label{prop:1}
The function $g(x,t)$ satisfies the relation 
\begin{eqnarray}\label{Mordell5g}
g(x,t) = \sqrt{\frac{2\pi}{t}} e^{-\frac{1}{2t}x^2}
g\left(\frac{2\pi x}{it}, \frac{4\pi^2}{t}\right)\,.
\end{eqnarray}
\end{proposition}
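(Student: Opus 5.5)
The plan is to use Parseval's identity (equivalently, the convolution theorem) together with the self-duality of $1/\cosh$ under the Fourier transform. Write $g(x,t)$ as the convolution of the Gaussian kernel $K_t(u)=(2\pi t)^{-1/2}e^{-u^2/(2t)}$ with $f(y)=1/\cosh(y/2)$, evaluated at $x$. With the convention $\hat h(k)=\int h(y)e^{-iky}\,dy$, the two transforms we need are
$\hat K_t(k)=e^{-\frac12 t k^2}$ and
\begin{eqnarray*}
\hat f(k)=\int_{-\infty}^\infty \frac{e^{-iky}}{\cosh(y/2)}\,dy = \frac{2\pi}{\cosh(\pi k)} .
\end{eqnarray*}
The second is the classical transform of the hyperbolic secant; it follows from $\int \frac{e^{-i\omega u}}{\cosh u}\,du=\pi/\cosh(\pi\omega/2)$ after the substitution $y=2u$. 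Both functions are integrable, and $\hat K_t$ decays like a Gaussian. Fourier inversion of the convolution therefore gives
\begin{eqnarray*}
g(x,t)=\frac{1}{2\pi}\int_{-\infty}^\infty e^{ikx} e^{-\frac12 tk^2}\frac{2\pi}{\cosh(\pi k)}\,dk
=\int_{-\infty}^\infty \frac{e^{ikx-\frac12 tk^2}}{\cosh(\pi k)}\,dk .
\end{eqnarray*}
This is the main analytic input. The one step that needs justification is the secant transform, which is standard: close the contour around the poles at $u=i\pi(n+\frac12)$, or cite it.

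Next, I rewrite this integral in the form of the definition (\ref{gdef}) with new parameters. Substitute $k=y/(2\pi)$, so $\cosh(\pi k)=\cosh(y/2)$ and $dk=dy/(2\pi)$. The exponent becomes $ixy/(2\pi)-\frac{t}{8\pi^2}y^2$. Set $t'=4\pi^2/t$, so that $\frac{t}{8\pi^2}=\frac{1}{2t'}$. Completing the square gives
\begin{eqnarray*}
-\frac{1}{2t'}\left(y-x'\right)^2+\frac{x'^2}{2t'}, \qquad x' = \frac{ixt'}{2\pi}=\frac{2\pi i x}{t}=\frac{2\pi x}{i}\cdot\left(-\frac{1}{t}\right)\cdot(-1) .
\end{eqnarray*}
The sign of $x'$ is immaterial, since $g(\cdot,t)$ is even in its first argument because $1/\cosh$ is even. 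So I may take $x'=\frac{2\pi x}{it}$. Then $\frac{x'^2}{2t'}=-\frac{4\pi^2x^2}{t^2}\cdot\frac{t}{8\pi^2}=-\frac{x^2}{2t}$.

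Collecting the prefactors: $\frac{1}{2\pi}=\sqrt{2\pi t'}\cdot\frac{1}{2\pi}\cdot\frac{1}{\sqrt{2\pi t'}}$, and $\frac{\sqrt{2\pi t'}}{2\pi}=\frac{\sqrt{8\pi^3/t}}{2\pi}=\sqrt{\frac{2\pi}{t}}$. This yields exactly (\ref{Mordell5g}), where $g$ at the complex first argument $x'$ means the integral (\ref{gdef}) with $x$ replaced by $x'$. That integral converges absolutely, because the Gaussian factor $e^{-(y-x')^2/(2t')}$ still decays like $e^{-y^2/(2t')}$ for complex $x'$. It also defines the analytic continuation in $x$, so no further justification is needed.

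The main point to check is the consistency of the analytic continuation: the completed-square form must agree with (\ref{gdef}) for complex shift. This holds because (\ref{gdef}) is taken literally as an integral over real $y$ with complex $x$, so the identity is an equality of convergent integrals rather than a contour shift. Contour shifting would be problematic here because of the poles of $1/\cosh(y/2)$.
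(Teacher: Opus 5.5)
Your proposal is correct and is essentially the paper's proof: write $g$ as the inverse Fourier transform of $2\pi e^{-\frac12\omega^2 t}/\cosh(\pi\omega)$, rescale $\omega = y/(2\pi)$, and complete the square to recognize $g$ at $t'=4\pi^2/t$ with imaginary first argument. The one slip is your displayed chain asserting $\frac{2\pi i x}{t}=\frac{2\pi x}{it}$, which is off by a sign; your appeal to the evenness of $g$ in its first argument (valid for complex arguments via $y\to -y$) repairs it, whereas the paper avoids the issue by using the kernel $e^{-i\omega y}$ in the inversion.
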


\begin{proof}
The proof is given in Appendix~\ref{sec:app.B}.
\end{proof}


\section{Series expansions for the logistic-normal integral}
\label{sec:3}

Series expansions for the Mordell integral have been obtained by an
application of the Poisson summation formula in \cite{Mordell1933}.
In the context of the numerical evaluation of the logistic-normal integral
one such series was obtained for the $g(z,t)$ function by the same method 
in \cite{DP}, exploiting the quasi-periodicity relation (\ref{Mordell2g}). 
The result in \cite{DP} can be expressed in two alternative forms,
one of which will turn out to be more suited for the numerical evaluation of
this function. 

\begin{proposition}\label{propPoisson}
i) The function $g(z,t)$ has the series expansion
\begin{eqnarray}\label{PoissonTheta4}
&& \vartheta_4(\frac{i}{2}z, e^{-\frac12 t})
g(z,t)  = 
2 \sum_{n=-\infty}^\infty (-1)^n e^{ (n-\frac12) z}
\frac{q^{n^2-\frac14}}{1 - q^{2n-1}}
+ \frac{4\pi}{t}
\sum_{n=-\infty}^\infty  \exp\Big( \frac{2\pi in z}{t} \Big)
\frac{q_1^{n^2+n}}{1 + q_1^{2n}}  \,.
\end{eqnarray}

ii) An equivalent series expansion is given by
\begin{eqnarray}\label{PoissonTheta2}
&& \vartheta_2(\frac{i}{2}z, e^{-\frac12 t})
g(z,t) = 
\frac{4\pi}{t} \sum_{n=-\infty}^\infty (-1)^n e^{-\frac{2\pi i}{t} (n-\frac12) z}
\frac{q_1^{n^2-\frac14}}{1 - q_1^{2n-1}}
+ 2
\sum_{n=-\infty}^\infty  e^{nz}
\frac{q^{n^2+n}}{1 + q^{2n}}  \,.
\end{eqnarray}
In both expressions we denoted $q = e^{-\frac12 t}, q_1 = e^{-\frac{2\pi^2}{t}}$.
\end{proposition}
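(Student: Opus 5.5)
Both expansions come from one mechanism: combine the quasi-periodicity (\ref{Mordell2g}) with a theta function to build an entire, (quasi-)periodic function, then identify it by its poles and residues. For part i), multiply $g(z,t)$ by $\vartheta_4(\frac{i}{2}z,q)$, $q=e^{-\frac12 t}$. Recall $\vartheta_4(w,q)=\sum_n(-1)^nq^{n^2}e^{2inw}$, so $\vartheta_4(\frac i2 z,q)=\sum_n(-1)^n q^{n^2}e^{-nz}$. Under $z\mapsto z+t$ it picks up a factor $-q^{-1}e^{z}$ up to a shift of $n$. Relation (\ref{Mordell2g}), rewritten as $g(z+t,t)=-e^{-z-\frac t2}g(z,t)+2e^{-\frac12 z-\frac38 t}$, then shows that $F(z):=\vartheta_4(\frac i2 z,q)g(z,t)$ satisfies an \emph{inhomogeneous} periodicity $F(z+t)=F(z)+(\text{theta-like sum})$. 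The first step is therefore to subtract a particular solution $P(z)$ of this functional equation. I will take $P(z)=2\sum_n(-1)^ne^{(n-\frac12)z}q^{n^2-\frac14}/(1-q^{2n-1})$, the first sum in (\ref{PoissonTheta4}), and check directly that $P(z+t)-P(z)$ reproduces the inhomogeneous term. The denominators $1-q^{2n-1}$ arise from solving $c_n(e^{(n-\frac12)t}\cdot\text{stuff}-1)=\text{coefficient}$ termwise, which is why they appear. After this step, $F-P$ is $t$-periodic in $z$.

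Next I need the behaviour of $F-P$ in the imaginary direction. From (\ref{gdef}), $g(z,t)$ is entire in $z$: the Gaussian convolution of $1/\cosh$ converges for complex $z$. Its growth in $\Im z$ is controlled by Proposition~\ref{prop:1}, which exchanges real and imaginary arguments and shows that $g$ is, up to the Gaussian factor, another $g$ with real argument. The Poisson summation step then expands the $t$-periodic function $F-P$ in a Fourier series $\sum_n a_n e^{2\pi i n z/t}$, with $a_n=\frac1t\int_0^t (F-P)(z)e^{-2\pi inz/t}\,dz$.

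To compute $a_n$, I unfold the integral: write $g$ through (\ref{gdef}), write $\vartheta_4$ as its series, and combine the sum over theta indices with the integral over a period into a single integral over $\mathbb R$. This is the standard Poisson/Mordell unfolding, as in \cite{Mordell1933,DP}. It produces Gaussian Fourier transforms of $1/\cosh(y/2)$, which are known in closed form: $\int e^{i\omega y}/\cosh(y/2)\,dy=2\pi/\cosh(\pi\omega)$. Evaluating at $\omega$ shifted by the Gaussian completion of the square should give $a_n=\frac{4\pi}{t}q_1^{n^2+n}/(1+q_1^{2n})$, with $q_1=e^{-2\pi^2/t}$. I expect this step to be the main obstacle. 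The unfolded integral must be shifted off the real line, so the contributions of the poles of $1/\cosh$ at $y=i\pi(2m+1)$ have to be tracked; these are precisely the terms that the particular solution $P$ was designed to cancel. Getting the bookkeeping of $P$ versus these residues exactly right, including the $-\frac14$ shifts and signs, is the delicate part. I will verify the result by checking it against (\ref{gatt2}) at $z=\pm\frac12 t$.

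For part ii), I will not redo the computation. Instead I apply Proposition~\ref{prop:1} to (\ref{PoissonTheta4}) written at $(z',t')=(2\pi z/(it),4\pi^2/t)$. Under this substitution $q\leftrightarrow q_1$, and the Jacobi imaginary transformation sends $\vartheta_4(\frac i2 z',e^{-t'/2})$ to a multiple of $\vartheta_2(\frac i2 z,q)$, with factor $\sqrt{t/2\pi}\,e^{z^2/(2t)}$. This factor cancels exactly the prefactor $\sqrt{2\pi/t}\,e^{-z^2/(2t)}$ in (\ref{Mordell5g}). The two sums of (\ref{PoissonTheta4}) are then exchanged: the $e^{(n-\frac12)z'}$ sum becomes the $e^{-\frac{2\pi i}{t}(n-\frac12)z}$ sum with prefactor $\frac{4\pi}{t}$, and the Fourier sum becomes $2\sum e^{nz}q^{n^2+n}/(1+q^{2n})$, using $\frac{4\pi}{t'}\cdot\frac{4\pi^2}{t}\cdot\frac{1}{2\pi}=2$. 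This gives (\ref{PoissonTheta2}).
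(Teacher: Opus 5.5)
\textbf{Part ii)} is the paper's argument, Proposition~\ref{prop:1} combined with \eqref{Jacobi4}. One slip: $z'^2=-4\pi^2z^2/t^2$, so \eqref{Jacobi4} at $(z',t')$ gives $\vartheta_4(\frac i2 z',q_1)=\sqrt{t/2\pi}\,e^{-z^2/(2t)}\,\vartheta_2(\frac i2 z,q)$, not $e^{+z^2/(2t)}$. This Gaussian cancels the one in $g(z',t')=\sqrt{t/2\pi}\,e^{z^2/(2t)}g(z,t)$ and leaves the factor $t/2\pi$, which your final constant $2$ correctly reflects.

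\textbf{Part i)} has a genuine gap. Your preliminary steps are correct: $F(z+t)=F(z)-2e^{\frac12 z+\frac18 t}\vartheta_4(\frac i2 z,q)$, and your $P$ satisfies the same difference equation. But that equation fixes $P$ only up to an arbitrary $t$-periodic function, so the periodicity of $F-P$ certifies nothing. The whole content of \eqref{PoissonTheta4} lies in the Fourier coefficients $a_n$, and you do not compute them. The mechanism you sketch also fails as stated:
\begin{itemize}
\item Merging the theta sum with $\int_0^t$ into one integral over $\mathbb R$ requires the $k$-th summand $(-1)^kq^{k^2}e^{-kz}g(z,t)$ to be the translate by $kt$ of a single function. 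If it were, $F$ itself would be $t$-periodic, contradicting the inhomogeneous relation you just derived.
\item The exponentials $e^{(n-\frac12)z}$ in $P$ are not residues at the poles $y=i\pi(2m+1)$ of $1/\cosh(y/2)$. Residues there produce Gaussian factors $e^{-(z-i\pi(2m+1))^2/(2t)}$, not exponentials.
\end{itemize}

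The missing idea is that $F-P$ \emph{is} the periodization $\sum_k g(z+kt,t)$, which is the paper's starting point. Iterating \eqref{Mordell2g} gives $g(z+kt,t)=(-1)^kq^{k^2}e^{-kz}g(z,t)+d_k(z)$, with
\[
d_k(z)=-2\sum_{j=1}^{k}(-1)^jq^{(j-\frac12)(2k-j+\frac12)}e^{-(j-\frac12)z}\quad (k\ge 1),
\]
for example $d_1=2e^{-\frac12 z-\frac38 t}$, and $d_{-k}(z)=d_k(-z)$ by evenness of $g$. Summing over $k$, the coefficients $\sum_k(-1)^kq^{k^2}e^{-kz}$ rebuild $\vartheta_4(\frac i2 z,q)$. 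Exchanging the order of summation and using $\sum_{k\ge j}q^{(j-\frac12)(2k-j+\frac12)}=q^{j^2-\frac14}/(1-q^{2j-1})$ gives $\sum_k d_k=-P$, which is what actually validates your choice of $P$. Hence $F-P=\sum_k g(z+kt,t)$, and only now is unfolding legitimate:
\[
a_n=\frac1t\int_{\mathbb R}g(w,t)e^{-2\pi inw/t}\,dw=\frac{2\pi}{t}\,\frac{e^{-2\pi^2n^2/t}}{\cosh(2\pi^2n/t)}=\frac{4\pi}{t}\,\frac{q_1^{n^2+n}}{1+q_1^{2n}},
\]
which is just Poisson summation.

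If you want to keep your contour picture, the correct version lives in the Fourier variable. Take $(-1)^kq^{k^2}e^{-kz}\int_{\mathbb R}\frac{e^{-i\omega z-\omega^2t/2}}{\cosh\pi\omega}\,d\omega$ and move the contour to $\mathbb R+ik$. The shifted integral becomes exactly $g(z+kt,t)$, and the residues of $1/\cosh\pi\omega$ at $\omega=i(l+\frac12)$, $0\le l<k$, contribute precisely $-d_k(z)$.
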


\begin{proof}
The proof is given in the Appendix B.
\end{proof}

Although mathematically equivalent, these two series have very different 
properties when used for the numerical evaluation of $g(z,t)$, as they involve 
a ratio of two functions. 
The Jacobi theta function $\vartheta_4(i\frac{z}{2},e^{-\frac12 t})$ has zeros 
at the points
$z_{m,n} = \frac12 t + m 2\pi i  + n t$ with $(m,n)\in \mathbb{Z}$.
In particular, $\vartheta_4(\frac{i}{2}z,e^{-\frac12 t})$ vanishes at the points 
$z = \pm \frac12 t$, such that at these  points 
the left-hand
side of (\ref{PoissonTheta4}) vanishes. The right-hand side of 
(\ref{PoissonTheta4}) must 
also vanish at this point, because $g(\frac12 t,t)$ is finite, see 
Eq.~(\ref{gatt2}).
As noted in \cite{DP}, this introduces numerical errors in the evaluation
of $g(z,t)$ close to the points $z_k=(k+\frac12)t$, where the expression
(\ref{PoissonTheta4}) leads to ratios of very small numbers.

Consider next the equivalent series (\ref{PoissonTheta2}).
The function $\vartheta_2(i\frac{z}{2},e^{-\frac12 t})$ has simple zeros at
$z_{m,n} = -i\pi + m 2\pi i  + n t$ with $ (m,n)\in \mathbb{Z}$.
None of these points is on the real axis of $z$. The zeros which are closest
to the real axis are at $z_k = \pm  i\pi + kt$. This avoids the 
numerical instabilities which are introduced in the numerical evaluation of 
$g(z,t)$ using the series expansion (\ref{PoissonTheta4})
by the 0/0 limit around the $z = \pm\frac12 t$ points. 
Numerical tests presented in Sec.~\ref{sec:6} confirm that the relation 
(\ref{PoissonTheta2}) gives a more stable
numerical evaluation of $g(z,t)$ along the real axis $z\in \mathbb{R}$.


The series expansions in Proposition~\ref{propPoisson} can be expressed in 
a compact way in terms of the level one Appell-Lerch sums.
They are defined as \cite{Appell}
\begin{eqnarray}
A_1(u,v;\tau) = e^{\pi iu} 
\sum_{k=-\infty}^\infty
(-1)^k \frac{e^{\pi i \tau (k^2+k)} e^{2\pi i k v}}
{1 - e^{2\pi i k\tau + 2\pi i u}} = z^\frac12
\sum_{k=-\infty}^\infty (-1)^k \frac{q^{ k(k+1)}}{1-z q^{2k}} y^k
\end{eqnarray}
where $z = e^{2\pi i u}, y = e^{2\pi i v}, q = e^{\pi i \tau}$ and
$v \in \mathbb{C}, u \in \mathbb{C} \backslash ( \mathbb{Z} \tau + \mathbb{Z} )$.

These functions satisfy a large number of symmetry relations, see 
Proposition 1.4 in \cite{Zwegers}.
These functions are related to $\mu(u,v;\tau)$ defined in
Proposition~1.4 of \cite{Zwegers} as
$A_1(u,v;\tau) = \mu(u,v;\tau) \vartheta_1(v;\tau)$.


We give next a general series expansion for $g(z,t)$ in terms of the 
Appell-Lerch sums.
\begin{proposition}\label{prop:gen}
The function $g(z,t)$ has the series expansion 
\begin{eqnarray}\label{vgeneral}
&& \frac{i}{2} \vartheta_1(v\pi;e^{-\frac12 t}) g(z,t) = \\
&& \qquad A_1\left(v+\frac{iz}{2\pi},v;\frac{it}{2\pi}\right) + 
\frac{2\pi i}{t} e^{- \frac{z^2}{2t} -\frac{2\pi^2 v^2}{t}}
A_1\left(\frac{z}{t} + \frac{2\pi v}{it}, \frac{2\pi v}{it} ; 
\frac{2\pi i}{t}\right) \,, \nonumber
\end{eqnarray}
where $v$ is an arbitrary complex number.
\end{proposition}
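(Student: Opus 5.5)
Proposition~\ref{prop:gen} generalizes Proposition~\ref{propPoisson}, so I will repeat the Poisson-summation argument with an extra twist parameter $v$, and then check that special values of $v$ reproduce (\ref{PoissonTheta4}) and (\ref{PoissonTheta2}).

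\textbf{Step 1 (defining sum).} For fixed $t$ and complex $v$, consider
\[
S(z)=\sum_{k\in\mathbb{Z}}(-1)^k e^{\pi i\tau(k^2+k)}e^{2\pi i k v}\, e^{kz}\, g(z+kt,t),\qquad \tau=\frac{it}{2\pi}, \quad q=e^{-\frac12 t}.
\]
With $q^{k^2+k}=e^{-\frac12 t(k^2+k)}$, the weights are chosen so that the recursion (\ref{Mordell2g}) telescopes the sum up to theta-type boundary terms. Equivalently, I will write $g$ as its defining Gaussian integral (\ref{gdef}) and insert the theta factor $\vartheta_1(v\pi;q)=\sum_k(\cdots)$ directly:
\[
\tfrac{i}{2}\vartheta_1(v\pi;q)\,g(z,t)=\int\frac{dy}{\sqrt{2\pi t}}\,e^{-(z-y)^2/2t}\,\frac{1}{\cosh(y/2)}\sum_k(-1)^k q^{(k+\frac12)^2}e^{(2k+1)\pi i v}.
\]

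\textbf{Step 2 (split the theta sum).} I will complete the square in each term, shifting $y\to y+kt$ so that each summand becomes a Gaussian centred at a shifted point. Then I split $1/\cosh(y/2)$ through the identity
\[
\frac{1}{\cosh(y/2)}=\frac{2e^{y/2}}{1+e^{y}},
\]
which yields a geometric series in $q^{2k}e^{2\pi i u}$, with $u=v+\frac{iz}{2\pi}$. Applying Poisson summation to the $k$-sum splits the result into two contributions. The zero-frequency/residue part produces the direct Appell--Lerch sum
\[
A_1\!\left(v+\tfrac{iz}{2\pi},\,v;\,\tfrac{it}{2\pi}\right),
\]
whose denominators $1-zq^{2k}$ arise exactly from the poles of $1/(1+e^y)$ at $y=i\pi(2m+1)$. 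The dual frequencies produce Gaussian integrals that evaluate, via the modular substitution $\tau\to-1/\tau$, into the second term. This substitution is exactly the mechanism behind Proposition~\ref{prop:1}: the prefactor $\frac{2\pi i}{t}e^{-z^2/2t-2\pi^2v^2/t}$ is the familiar $\sqrt{-i\tau}^{-1}e^{\pi i u^2/\tau}$-type factor. The cleanest implementation is a contour-shift argument. Move the integration line from $\mathbb{R}$ to $\mathbb{R}+2\pi i$; the integrand is quasi-periodic with multiplier $e^{2\pi i v}$-type factors. The difference of the two line integrals equals the residue at $y=i\pi$, and solving the resulting functional equation gives the two pieces.

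\textbf{Step 3 (shortcut).} Alternatively, I can avoid redoing the analysis. Both sides are entire in $v$, since the poles of the $A_1$ terms cancel, as in Zwegers' $\mu$-function. It therefore suffices to show that the right-hand side, divided by $\vartheta_1$, is independent of $v$. In Zwegers' notation the right-hand side is
\[
\vartheta_1\cdot\Bigl[\mu\bigl(u,v;\tau\bigr)+(\text{modular transform of }\mu)\Bigr].
\]
I will use Zwegers' Proposition~1.4, namely the $u,v$ shift formula
\[
\mu(u+z,v+z)-\mu(u,v)=\frac{i\eta^3\,\vartheta(u+v+z)\,\vartheta(z)}{\vartheta(u)\,\vartheta(v)\,\vartheta(u+z)\,\vartheta(v+z)},
\]
together with the $S$-transformation, to show that the $v$-dependence of the two terms cancels. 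Since $u-v=\frac{iz}{2\pi}$ is fixed, varying $v$ is precisely a simultaneous shift of both arguments. The value at $v=\frac12$ or $v=0$ (after taking the limit) then reduces to Proposition~\ref{propPoisson}(i) or (ii), using $\vartheta_1(\pi/2)=\vartheta_2$ and the corresponding half-shift relating $\vartheta_1$ to $\vartheta_4$ via $v=\frac12+\frac{\tau}{2}$.

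\textbf{Main obstacle.} The hard step is showing that the theta-quotient correction terms from the shift formula cancel between the direct term and the $S$-transformed term. This requires matching the nonholomorphic parts: the Mordell integral $h$ in $\tilde\mu$ must appear identically in both. I expect this to follow because $\tilde\mu=\mu+\frac{i}{2}R$ is modular, so $\mu(u,v;\tau)-(\text{its }S\text{-image})$ equals a Mordell integral $h(u-v;\tau)$ depending only on $u-v$. Indeed, that $h$ is essentially $g(z,t)$ itself, which is the identification made in \cite{DP2}. I will verify the normalization constants by checking the case $v=0$, where the identity should collapse to (\ref{PoissonTheta2}).
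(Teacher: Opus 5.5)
There is a genuine gap in the base case, though it can be repaired. Steps 1--2 are not yet an argument: the sum $S(z)$ is never used, and the Poisson/contour description only names which pieces should give which $A_1$. So everything rests on Step 3.

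\textbf{The base case.} The left sides of Proposition~\ref{propPoisson} carry the $z$-dependent factors $\vartheta_4(\frac{i}{2}z,q)$ and $\vartheta_2(\frac{i}{2}z,q)$, so no constant $v$ can reproduce them. At $v=\frac12$ the left side is $\frac{i}{2}\vartheta_2(0,q)\,g(z,t)$. The right side then contains $A_1(\frac12+\frac{iz}{2\pi},\frac12;\frac{it}{2\pi})$, with denominators $1+q^{2k}e^{-z}$. This is the expansion from (\ref{choice2}) with $C=i\pi$, which is itself only a consequence of the proposition you are proving, not (\ref{PoissonTheta2}). At $v=0$ both sides vanish identically because $\vartheta_1(0,q)=0$. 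So neither the base case nor your planned normalization check can be done there, and the limit $v\to 0$ gives an expansion involving $\partial_v A_1$, not (\ref{PoissonTheta4}).

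The fix uses the fact that your invariance holds at fixed $z$, so $v$ may depend on $z$. Take $v=\frac12-\frac{iz}{2\pi}$, i.e.\ $u=\frac12$. Then $e^{2\pi i u}=-1$ and $e^{2\pi i k v}=(-1)^k e^{kz}$. The dual arguments are $u'=-\frac{i\pi}{t}$ and $v'=-\frac{i\pi}{t}-\frac{z}{t}$, with nome $q_1$, and the exponential prefactor becomes $q_1^{1/4}e^{i\pi z/t}$. The two $A_1$ terms then reproduce the two sums of (\ref{PoissonTheta2}) term by term, each multiplied by $\frac{i}{2}$, while $\vartheta_1(\frac{\pi}{2}-\frac{iz}{2},q)=\vartheta_2(\frac{i}{2}z,q)$.

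\textbf{The main obstacle.} This is misdiagnosed, and your proposed resolution makes the plan circular. Write the right-hand side as $\vartheta(v;\tau)M(u,v)$, where
$M=\mu(u,v;\tau)+\frac{1}{\sqrt{-i\tau}}e^{\pi i(u-v)^2/\tau}\mu(\frac{u}{\tau},\frac{v}{\tau};-\frac{1}{\tau})$
and $\vartheta(v;\tau)=-\vartheta_1(\pi v,q)$ is Zwegers' theta function. Note the plus sign in $M$.

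Define $T(u,v,w;\tau):=\mu(u+w,v+w;\tau)-\mu(u,v;\tau)$, the theta quotient given by the shift formula. These quotients cancel by a purely holomorphic computation. Use $\eta(-1/\tau)=\sqrt{-i\tau}\,\eta(\tau)$ and $\vartheta(x/\tau;-1/\tau)=-i\sqrt{-i\tau}\,e^{\pi i x^2/\tau}\vartheta(x;\tau)$. The exponents combine as $(u+v+w)^2+w^2-u^2-v^2-(u+w)^2-(v+w)^2=-(u-v)^2$, and one finds
$\frac{1}{\sqrt{-i\tau}}e^{\pi i(u-v)^2/\tau}\,T(\frac{u}{\tau},\frac{v}{\tau},\frac{w}{\tau};-\frac{1}{\tau})=-T(u,v,w;\tau)$.
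No $\tilde\mu$ or $R$ enters.

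If you instead appeal to ``$\mu$ plus its $S$-image equals $\frac{1}{2i}h(u-v;\tau)$'', you are assuming Zwegers' modular relation, i.e.\ (\ref{A1mod}). Then neither the shift argument nor a base case is needed: substituting $u-v=\frac{iz}{2\pi}$, $\tau=\frac{it}{2\pi}$ and $g(z,t)=h(\frac{iz}{2\pi};\frac{it}{2\pi})$ gives the proposition at once. That is exactly the paper's proof.

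\textbf{Comparison.} With the explicit cancellation and the base point $u=\frac12$, your Step 3 becomes a genuinely different and valid proof. It needs only the shift formula, the $\eta$/$\vartheta$ transformation laws, and the Poisson-summation result (\ref{PoissonTheta2}). In effect it re-derives (\ref{A1mod}) for $\tau\in i\mathbb{R}_{>0}$. The paper's route is a one-line substitution, but it imports the full modular relation from Zwegers.
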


\begin{proof}
The Appell-Lerch sums are related to the Mordell integral $h(u;\tau)$
by modular transformations, 
see Proposition 1.5 in \cite{Zwegers}
\begin{eqnarray}\label{A1mod}
A_1(u,v;\tau)  - \frac{1}{\tau} e^{\frac{\pi i}{\tau} (u^2-2u v)}
A_1\left(\frac{u}{\tau}, \frac{v}{\tau} ; - \frac{1}{\tau}\right) = 
-\frac{i}{2} \vartheta_1(v\pi;\tau) h(u-v;\tau) \,.
\end{eqnarray}

Take $u-v=\frac{iz}{2\pi}$ and $\tau=\frac{it}{2\pi}$ 
in (\ref{A1mod}). Next we note that we have the relation
\begin{eqnarray}\label{g2hp}
g(z,t) = h\left( \frac{iz}{2\pi}, \frac{it}{2\pi}\right)\,,
\end{eqnarray}
which is obtained by 
combining the relations (\ref{Mordell5g}) and (\ref{g2h}).
Using (\ref{g2hp}) yields (\ref{vgeneral}).
\end{proof}

The relation (\ref{vgeneral}) is the most general series expansion for $g(z,t)$. 
There are infinitely many such expressions, since $v$ is arbitrary. 
We would like to ask whether there is an optimal
choice for $v$ for the purpose of the numerical evaluation of $g(z,t)$. 

We note two possible choices for $v$ which simplify the
expression (\ref{vgeneral}). They are motivated by the 
observation that $v$ appears in the arguments of the 
Appell-Lerch sum in (\ref{vgeneral}) as $v$
and in the combination $v+\frac{iz}{2\pi}$. 
Under the following choices either is a constant and does not depend on $z$.

\begin{remark}
Consider the two choices for $v$ in (\ref{vgeneral})
\begin{eqnarray}\label{choice1}
&& i) \qquad v = - \frac{i}{2\pi} (z + C) \\
\label{choice2}
&& ii) \qquad v = - \frac{i}{2\pi} C
\end{eqnarray}
with $C$ a constant. It is easy to see that
with the substitution i) the choice $C=i\pi$ reproduces 
the series (\ref{PoissonTheta2}), and $C=\frac12 t$ reproduces the series 
(\ref{PoissonTheta4}).
\end{remark}

The choice ii) with $C=i\pi$ gives the following series
\begin{eqnarray}
\vartheta_2(0, q) g(z,t) &=& 2e^{-\frac12 z} 
\sum_{k=-\infty}^\infty \frac{q^{k^2+k}}{1+q^{2k} e^{-z}} \\
&+& \frac{4\pi}{t} e^{\frac{\pi i z}{t} - \frac{z^2}{2t}}
\sum_{k=-\infty}^\infty \frac{q_1^{k^2-\frac14}}{1+q_1^{2k-1} e^{\frac{2\pi i}{t}z}}
\nonumber
\end{eqnarray}
and with $C=\frac12 t$ 
\begin{eqnarray}
\vartheta_4(0, q) g(z,t) &=& 2e^{-\frac12 z} 
\sum_{k=-\infty}^\infty (-1)^k \frac{q^{k^2 - \frac14 }}{1-q^{2k-1} e^{-z}} \\
&+& \frac{4\pi}{t} e^{\frac{\pi i z}{t} - \frac{z^2}{2t}}
\sum_{k=-\infty}^\infty \frac{q_1^{k^2+k}}{1+q_1^{2k} e^{\frac{2\pi i}{t}z}}
\nonumber
\end{eqnarray}
with $q=e^{-\frac12 t}, q_1=e^{-\frac{2\pi^2}{t}}$. These series are very
similar to (\ref{PoissonTheta2}), (\ref{PoissonTheta4}), although the evaluation
of the second sum in either expression requires complex arithmetic for any $z$, 
which may be inconvenient for the evaluation of $g(z,t)$ for $z\in \mathbb{R}$.


We will show next that the series (\ref{PoissonTheta2}) corresponds to 
an optimal choice of $v$ from the point of view of the numerical evaluation
of the function $g(z,t)$ with $z\in\mathbb{R}$.
For the purpose of the numerical evaluation of $g(z,t)$, we would like
to choose $v$ such that the evaluation of this function from (\ref{vgeneral})
does not involve the ratio of two small numbers. The Jacobi theta function on 
the left-hand side $\vartheta_1(v\pi, e^{-\frac12 t})$ has zeros at 
$\pi v_{m,n} = m\pi + in \frac{t}{2}$ with $m,n\in \mathbb{Z}$.
If $v$ is chosen to be a function of $z$, as in (\ref{choice1}), then we would 
like the zeros of $i\vartheta_1(\pi v(z,t), e^{-\frac12 t})$ in the $z$ 
plane to be as far away as possible from the real axis. 
For the choice (\ref{choice1}) the zeros
are at $z_{m,n}+C = 2i\pi m - n t$, so choosing $C=i\pi$ ensures that the zeros
are as far as possible from the real axis (the nearest zeros are at $\pm i\pi$). 
Using the relation (\cite{WW}, p.~464) 
\begin{eqnarray}
\vartheta_1\left(-\frac{iz}{2} + \frac{\pi}{2}, e^{-\frac12 t}\right)= 
\vartheta_2\left(\frac{iz}{2}, e^{-\frac12 t}\right)
\end{eqnarray}
one finds that (\ref{vgeneral}) reproduces the series (\ref{PoissonTheta2}).
This argument suggests that this optimal choice of $v$
ensures numerical stability in numerical evaluations of $g(z,t)$ with $z\in\mathbb{R}$.

\section{Derivatives of the logistic-normal integral}
\label{sec:4}

Certain statistics applications require the evaluation of the 
derivatives of the logistic-normal integral. The derivatives of the 
logistic-normal integral are relevant for the solution of the
maximum likelihood estimation problem for a logistic model with Gaussian noise.

We consider in this section a class of integrals which are useful for this purpose,
and present exact evaluations on discrete grids of $z$ points and a series expansion.
Define
\begin{equation}
\varphi_j(z,t) = 
\int_{-\infty}^\infty \frac{x^j}{1+e^x} e^{-\frac{1}{2t}(x-z)^2}
\frac{dx}{\sqrt{2\pi t}}\,.
\end{equation}
For $j=0$ the function $\varphi_0(z,t)$ reproduces the logistic-normal 
integral (\ref{varphidef}).

These integrals are related to the derivatives of the logistic-normal integral with respect to its first argument. The first few derivatives are given below, for $j = 0,1,2, \cdots$
\begin{eqnarray}\label{dphi}
&& \partial_z \varphi_j(z,t) = - \frac{z}{t} \varphi_j(z,t) + \frac{1}{t}\varphi_{j+1}(z,t) \\
&& \partial_z^2 \varphi_j(z,t) = \Big( \frac{z^2}{t^2} - \frac{1}{t}\Big) \varphi_j(z,t) 
-\frac{2z}{t^2} \varphi_{j+1}(z,t) + \frac{1}{t^2}\varphi_{j+2}(z,t) \,.
\end{eqnarray}
The derivative with respect to $t$ is obtained by noting that $\varphi_j(z,t)$ satisfy 
the 1-dimensional heat equation $\partial_t \varphi_j(z,t) = \frac12 \partial_z^2 \varphi_j(z,t)$.

It is convenient to introduce the functions
\begin{equation}
g_j(z,t) = \int_{-\infty}^\infty \frac{x^j}{\cosh(x/2)} e^{-\frac{1}{2t} (x-z)^2} \frac{dx}{\sqrt{2\pi t}}\,,\quad j=1,2,\cdots
\end{equation}
which are related to the integrals $\varphi_j(z,t)$ as
\begin{equation}
\varphi_j(z,t) = \frac12 e^{-\frac12 z + \frac18 t} g_j(z-\frac12 t,t) \,.
\end{equation}

The functions $g_j(z,t)$ and their evaluation were studied in Sec.~5 of \cite{DP}.
The functions $\varphi_j(z,t)$ satisfy recursion relations
\begin{equation}\label{varphijrec}
\varphi_j(z+t,t) = e^{-z-\frac12 t} (f_j(z,t) - \varphi_j(z,t))
\end{equation}
with $f_j(z,t) := \int_{-\infty}^\infty x^j e^{-\frac{1}{2t}(x-z)^2} \frac{dx}{\sqrt{2\pi t}}$.
They can be evaluated in closed form on grids of uniformly spaced real $z$,
which are integers (half-integers) of $t$ for even (odd) $j$. Table \ref{tab:1}
summarizes the information required for this evaluation for $j=0,1,2,3$.

The functions $g_j(z,t)$ are even (odd) in $z$ for even (odd) index $j$. This gives the following transformation of $\varphi_j(z,t)$ under a sign change of the first argument
\begin{align}\label{parity}
\varphi_j(-z,t) = \left\{
\begin{array}{cc}
e^{z+\frac12 t} \varphi_j(z+t,t) = f_j(z,t) - \varphi_j(z,t)  &  \mbox{ for even } j \\
- e^{z+\frac12 t} \varphi_j(z+t,t) = \varphi_j(z,t) - f_j(z,t) & \mbox{ for odd } j \\
\end{array}
\right.
\end{align}

We start by considering the evaluation of $\varphi_1(z,t)$, the first generalized logistic-normal integral.
As seen from \eqref{dphi} this function is related to the derivative of the logistic-normal integral with 
respect to its first argument
$\partial_z \varphi(z,t) = - \frac{z}{t} \varphi(z,t) + \frac{1}{t} \varphi_1(z,t)$.

\begin{table}
\begin{center}
\caption{Inputs for the exact evaluations of the first few logistic-normal integrals 
$\varphi_j(z,t)$.
These integrals can be evaluated exactly on the respective grids of points $z_k$ using the recursion relation (\ref{varphijrec}), starting with the exact value in the third 
column.} \label{tab:1}
{\begin{tabular}{lccc}
$j$	& Exact evaluation grid & Exact values & $f_j(x,t)$ \\ 
\hline
\hline
0 & $z_k = k t$ & $\varphi(0,t) = \frac12$ & $f_0(x,t)=1$ \\
1 & $z_k = (k+\frac12) t$ & $\varphi_1(\frac12 t,t) = 0$ & $f_1(x,t)=x$ \\
2 & $z_k = k t$ & $\varphi_2(0,t) = \frac12 t$ & $f_2(x,t)=x^2+t$ \\
3 & $z_k = (k + \frac12) t$ & $\varphi_3(\frac12 t,t) = 0$ & $f_3(x,t)=x^3+ 3xt$ \\
\hline
\end{tabular}}
{}%
\end{center}
\end{table}

The following result is an analog of Proposition \ref{prop:2} and gives
exact evaluations of $\varphi_1(z_k, t)$ 
at $z_k = (k + \frac12)t$. This can be proved either in a similar way to Proposition~\ref{prop:2}
or directly from the recursion \eqref{varphijrec}. For simplicity we omit the proof.

\begin{proposition}\label{prop:phi1}
For any $k\geq 0$ we have
\begin{align}
\varphi_1\Big((k+\frac12)t , t\Big) &= 
\sum_{j=0}^{k-1} (-1)^{k-j+1} (j+\frac12) t e^{\frac12 (j-k)(j+k+1)t} \,,\quad
k \geq 1 \\
&= 0 \,,\hspace{6.2cm} k=0 \,.\nonumber
\end{align}
The values for $k<0$ are obtained from \eqref{parity} as
\begin{equation}
\varphi_1\Big((-k+\frac12) t,t\Big) = - e^{kt} \varphi_1\Big((k+\frac12)t , t\Big) \,.
\end{equation}
\end{proposition}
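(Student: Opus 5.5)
The plan is to prove the formula by induction on $k$, using the recursion \eqref{varphijrec} with $j=1$ and $f_1(z,t)=z$. The base value $\varphi_1(\tfrac12 t,t)=0$ comes from the parity relation \eqref{parity}. For odd $j$, taking $z=-\tfrac12 t$ gives
\[
\varphi_1(\tfrac12 t,t)=\varphi_1(-\tfrac12 t,t)-f_1(-\tfrac12 t,t).
\]
The recursion at $z=-\tfrac12 t$ gives $\varphi_1(\tfrac12 t,t)=e^{0}\bigl(-\tfrac12 t-\varphi_1(-\tfrac12 t,t)\bigr)$. Adding these two relations yields $2\varphi_1(\tfrac12 t,t)=0$. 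Alternatively, $g_1$ is odd, so $g_1(0,t)=0$, and the relation $\varphi_1=\tfrac12 e^{-\frac12 z+\frac18 t}g_1(z-\tfrac12 t,t)$ at $z=\tfrac12 t$ gives the same conclusion.

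For the induction, write $a_k:=\varphi_1((k+\tfrac12)t,t)$. Setting $z=(k+\tfrac12)t$ in \eqref{varphijrec} gives the first-order linear recursion
\[
a_{k+1}=e^{-(k+1)t}\bigl((k+\tfrac12)t-a_k\bigr),
\]
since $z+\tfrac12 t=(k+1)t$. Its explicit solution is obtained by unrolling from $a_0=0$:
\[
a_k=\sum_{j=0}^{k-1}(-1)^{k-1-j}(j+\tfrac12)t\prod_{m=j+1}^{k}e^{-mt}.
\]
The product equals $\exp\bigl(-t\sum_{m=j+1}^k m\bigr)=\exp\bigl(-\tfrac12 t\,(k(k+1)-j(j+1))\bigr)=e^{\frac12(j-k)(j+k+1)t}$. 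Since $(-1)^{k-1-j}=(-1)^{k-j+1}$, this is the claimed formula. To present it as an induction, assume the formula for $a_k$ and substitute it into the recursion. The new term $j=k$ contributes $(k+\tfrac12)t\,e^{-(k+1)t}$, which matches $e^{\frac12(k-(k+1))(2k+2)t}=e^{-(k+1)t}$ with sign $(-1)^{(k+1)-k+1}=+1$. Each old term is multiplied by $-e^{-(k+1)t}$, and the exponent check $\tfrac12(j-k)(j+k+1)-(k+1)=\tfrac12(j-k-1)(j+k+2)$ is an elementary expansion, with both sides equal to $\tfrac12\bigl(j^2+j-(k+1)(k+2)\bigr)$.

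For $k<0$, apply \eqref{parity} (odd $j$) at $z=(k-\tfrac12)t$:
\[
\varphi_1\bigl(-(k-\tfrac12)t,t\bigr)=-e^{(k-\frac12)t+\frac12 t}\,\varphi_1\bigl((k+\tfrac12)t,t\bigr).
\]
Since $-(k-\tfrac12)t=(-k+\tfrac12)t$, this is exactly the stated relation with factor $e^{kt}$. The only real care needed is the bookkeeping of signs and the exponent identity; there is no genuine obstacle.
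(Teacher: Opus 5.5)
Your proof is correct: the base value $\varphi_1(\tfrac12 t,t)=0$, the unrolled first-order recursion $a_{k+1}=e^{-(k+1)t}\bigl((k+\tfrac12)t-a_k\bigr)$ with the exponent identity $\sum_{m=j+1}^{k} m=\tfrac12\bigl(k(k+1)-j(j+1)\bigr)$, and the parity step at $z=(k-\tfrac12)t$ all check out. The paper omits the proof but notes that it can be obtained directly from the recursion \eqref{varphijrec}, which is exactly the route you take.
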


The first few values are $\varphi_1(3/2 t, t) = \frac12 t e^{-t}, \varphi_1(5/2t, t) = \frac32 t e^{-2t}-\frac12t e^{-3t}$, $\varphi_1(7/2t,t)=\frac52 t e^{-3t}- \frac32 t e^{-5t}+\frac12 t e^{-6t}$.

We give next a series expansion for $g_1(z,t)$, similar to that for $g(z,t)$,
which can be used for numerical evaluation of $\varphi_1(z,t)$. 
We use for this purpose the series expansion (\ref{PoissonTheta2}) which can be 
written compactly as
\begin{eqnarray}\label{PoissonTheta2a}
&& \vartheta_2(\frac{i}{2}z, e^{-\frac12 t})
g(z,t)  = 
\frac{4\pi}{t} S_1(z,q_1)
+ 2 S_2(z,q)
\end{eqnarray}
with $q = e^{-\frac12 t}, q_1 = e^{-\frac{2\pi^2}{t}}$ and
\begin{eqnarray}
&& S_1(z,q_1) := \sum_{n=-\infty}^\infty (-1)^n e^{-\frac{2\pi i}{t}(k-\frac12) z}
\frac{q_1^{n^2-1/4}}{1-q_1^{2n-1}} \\
&& S_2(z,q) := \sum_{n=-\infty}^\infty  e^{nz}
\frac{q^{n^2+n}}{1 + q^{2n}}
\end{eqnarray}

The sum $S_1(z,q_1)$ is a Laurent sum in powers of $e^{-\frac{2\pi i}{t} z}$, 
and $S_2(z,q)$ is a Laurent sum in powers of $e^z$. 
It is easy to check that for any $|q_1|<1$ and $|q|<1$ they both converge within the annulus of convergence $0 \leq |z| < \infty$. 
Thus they can be differentiated term by term with respect to $z$, and define 
convergent series expansions for $S'_1(z,q_1), S'_2(z,q)$.

\begin{proposition}\label{prop:g1Poisson}
The function $g_1(z,t)$ has the series expansion
\begin{eqnarray}\label{g1Poisson}
&& \vartheta_2(\frac{i}{2}z,q) g_1(z,t) = 
4\pi \Big( S'_1(z,q_1) - R_2(z,t) S_1(z,q_1) + \frac{z}{t} S_1(z,q_1) \Big) \\
&& \hspace{2.6cm} + \,
2t \Big( S'_2(z,q) - R_2(z,t) S_2(z,q) + \frac{z}{t} S_2(z,q)\Big)\nonumber
\end{eqnarray}
with 
\begin{equation}\label{R2def}
R_2(z,t) := \frac{i}{2} \frac{\vartheta'_2(\frac{i}{2} z,q)}{\vartheta_2(\frac{i}{2} z,q)} =
\frac12 \tanh z + \sinh z \sum_{k=1}^\infty \frac{1}{\cosh(k t) + \cosh z}
\end{equation}
and $q=e^{-\frac12 t},q_1=e^{-2\pi^2/t}$.
\end{proposition}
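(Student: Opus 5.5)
The plan is to obtain (\ref{g1Poisson}) by differentiating the series (\ref{PoissonTheta2a}) for $g(z,t)$ with respect to $z$, and then to convert $\partial_z g$ into $g_1$ using the Gaussian kernel.

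First, express $g_1$ through $g$ and its $z$-derivative. Differentiating under the integral sign in (\ref{gdef}), which is legitimate because $1/\cosh(x/2)$ is bounded and the Gaussian decays, gives
\begin{equation*}
\partial_z g(z,t) = -\frac{z}{t} g(z,t) + \frac1t g_1(z,t), \qquad\text{so}\qquad g_1(z,t) = t\,\partial_z g(z,t) + z\, g(z,t).
\end{equation*}
This is the $g$-analogue of (\ref{dphi}). It reduces the problem to computing $\vartheta_2(\frac{i}{2}z,q)\,\partial_z g(z,t)$.

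Second, differentiate (\ref{PoissonTheta2a}) in $z$. By the remark preceding the proposition, $S_1$ and $S_2$ are Laurent series convergent for all $z$, so they may be differentiated term by term. The chain rule gives
\begin{equation*}
\frac{d}{dz}\vartheta_2\Big(\frac{i}{2}z,q\Big) = \frac{i}{2}\vartheta_2'\Big(\frac{i}{2}z,q\Big) = R_2(z,t)\,\vartheta_2\Big(\frac{i}{2}z,q\Big),
\end{equation*}
where $\vartheta_2'$ denotes the derivative in the first argument. Hence
\begin{equation*}
\vartheta_2\,\partial_z g = \frac{4\pi}{t}S_1' + 2S_2' - R_2\,\vartheta_2\, g = \frac{4\pi}{t}S_1' + 2S_2' - R_2\Big(\frac{4\pi}{t}S_1 + 2S_2\Big),
\end{equation*}
where (\ref{PoissonTheta2a}) is used once more in the last step. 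Multiplying by $t$ and adding $z\,\vartheta_2 g = z\big(\frac{4\pi}{t}S_1 + 2S_2\big)$ gives exactly (\ref{g1Poisson}): the $S_1$ terms combine to $4\pi(S_1' - R_2S_1 + \frac zt S_1)$, and the $S_2$ terms to $2t(S_2' - R_2 S_2 + \frac zt S_2)$.

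Third, verify the closed form of $R_2$ in (\ref{R2def}) from the Jacobi triple product
\begin{equation*}
\vartheta_2(u,q) = 2q^{1/4}\cos u\prod_{n\ge1}(1-q^{2n})(1+2q^{2n}\cos 2u+q^{4n}).
\end{equation*}
Its logarithmic derivative is
\begin{equation*}
\frac{\vartheta_2'(u,q)}{\vartheta_2(u,q)} = -\tan u - \sum_{n\ge1}\frac{4q^{2n}\sin 2u}{1+2q^{2n}\cos 2u + q^{4n}}.
\end{equation*}
Substitute $u = iz/2$, using $\tan(iz/2) = i\tanh(z/2)$, $\sin(iz) = i\sinh z$ and $\cos(iz) = \cosh z$, and multiply by $i/2$. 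Then rewrite each denominator as
\begin{equation*}
1+2q^{2n}\cosh z + q^{4n} = 2q^{2n}(\cosh(nt)+\cosh z),
\end{equation*}
which uses $q^{-2n} = e^{nt}$. Each term of the sum becomes $\sinh z/(\cosh nt + \cosh z)$, and the leading term comes out as $\frac12\tanh(z/2)$. I therefore expect the first term in (\ref{R2def}) should read $\frac12\tanh\frac z2$, and I would check this, for example as $t\to\infty$ where the sum vanishes. This identification of $R_2$, together with keeping the prime convention consistent (derivative in the first argument of $\vartheta_2$ versus derivative in $z$), is the only delicate point. The rest is the bookkeeping in the second step.
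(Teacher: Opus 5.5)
Your proposal is correct and follows the paper's own proof: differentiate (\ref{PoissonTheta2a}) term by term, use $\frac{d}{dz}\vartheta_2(\frac{i}{2}z,q)=R_2\,\vartheta_2(\frac{i}{2}z,q)$ together with (\ref{PoissonTheta2a}) again, and convert to $g_1$ via $g_1=t\,\partial_z g+z\,g$. The paper only cites Whittaker--Watson for the closed form of $R_2$, whereas you derive it from the product formula, and your correction is right: the leading term is $\frac12\tanh\frac z2$, not the printed $\frac12\tanh z$, as the $t\to\infty$ limit $\vartheta_2(\frac{i}{2}z,q)\sim 2q^{1/4}\cosh\frac z2$ confirms.
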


\begin{proof}

Taking a derivative of the Poisson sum (\ref{PoissonTheta2}) with respect to $z$ gives 
\begin{eqnarray}\label{dzg}
&& \frac{i}{2} \vartheta'_2(\frac{i}{2}z,q) g(z,t) + \vartheta_2(\frac{i}{2}z,q) \partial_z g(z,t) \\
&& = R_2(z,t) \vartheta_2(\frac{i}{2}z,q) g(z,t) + \vartheta_2(\frac{i}{2}z,q) \partial_z g(z,t) \nonumber \\
&& = \frac{4\pi}{t} S'_1(z,q_1) + 2 S'_2(z,q) \,. \nonumber 
\end{eqnarray}
The series expansion for $R_2(z,t)$ defined in \eqref{R2def} is given in Problem 15 in 
Whittaker and Watson (1927), see page 489.

The equation (\ref{dzg}) can be expressed as a series expansion for $g_1(z,t)$ 
using the equation
\begin{equation}
g_1(z,t) = t \partial_z g(z,t) + z g(z,t) 
\end{equation}
which follows from \eqref{dphi} with $j=0$.
The final result can be put into the form (\ref{g1Poisson}).
\end{proof}

We give next exact evaluations also for the higher order logistic-normal integrals with $j=2,3$, which follow from the recursion relation \eqref{varphijrec}.

\begin{proposition}\label{prop:phi2}
For any $k\geq 0$ we have
\begin{align}
\varphi_2 (k t , t) &= (-1)^k e^{-\frac12 k^2 t}
\Big( \frac12 t - \frac12 
\sum_{j=0}^{k-1} (-1)^{j} t(1+j^2 t) e^{\frac12 j^2 t} \Big) \,,\quad
k \geq 1 \\
&= \frac12 t \,,\hspace{7.5cm} k=0 \,.\nonumber
\end{align}
The values for $k<0$ are obtained from \eqref{parity} as
\begin{equation}
\varphi_2( -k t, t ) = 
- e^{(k+\frac12) t} \varphi_2\Big((k+1)t, t\Big) \,.
\end{equation}
\end{proposition}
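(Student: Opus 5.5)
The plan is to prove both displayed relations directly from the recursion \eqref{varphijrec} with $j=2$ and $f_2(z,t)=z^2+t$ (Table~\ref{tab:1}), starting from $\varphi_2(0,t)=\frac12 t$. At the outset I flag that my own test of the displayed formula at $k=1$ disagrees with it, so what this plan actually establishes is a corrected version; details are below.

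\textbf{Base value.} The identity $\frac{1}{1+e^{x}}+\frac{1}{1+e^{-x}}=1$ together with the evenness of $x^2e^{-x^2/(2t)}$ gives $\varphi_2(0,t)=\frac12 f_2(0,t)=\frac12 t$. This settles $k=0$.

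\textbf{The recursion on the grid $z=jt$.} Completing the square, $(x-z-t)^2=(x-z)^2-2t(x-z)+t^2$, re-derives \eqref{varphijrec}. On the grid it reads
\[
\varphi_2((j+1)t,t)=e^{-jt-\frac12 t}\bigl(t(1+j^2t)-\varphi_2(jt,t)\bigr).
\]
To telescope, I introduce the normalized quantity $a_k:=(-1)^k e^{\frac12 k^2 t}\varphi_2(kt,t)$. Since $\frac12(j+1)^2t=\frac12 j^2t+jt+\frac12 t$, the exponentials cancel and the recursion becomes
\[
a_{j+1}=a_j+(-1)^{j+1}t(1+j^2t)e^{\frac12 j^2 t}.
\]
Summing from $j=0$ to $k-1$ with $a_0=\frac12 t$ gives
\[
\varphi_2(kt,t)=(-1)^k e^{-\frac12 k^2 t}\Bigl(\tfrac12 t-\sum_{j=0}^{k-1}(-1)^j t(1+j^2t)e^{\frac12 j^2 t}\Bigr).
\]

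\textbf{Discrepancy with the display (the main obstacle).} This is the displayed formula except that the sum carries coefficient $1$, not $\frac12$. Testing $k=1$ directly from the recursion gives $\varphi_2(t,t)=e^{-\frac12 t}\bigl(t-\frac12 t\bigr)=\frac12 t\,e^{-\frac12 t}>0$. Reading the displayed formula literally instead gives $-e^{-\frac12 t}\bigl(\frac12 t-\frac12 t\bigr)=0$. Since $\varphi_2(t,t)$ is the integral of a positive function, it cannot vanish, so the literal $\frac12$ cannot be proved.

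I would therefore re-verify the two inputs (the base value and $f_2$) once more. If they stand, I would present the telescoped identity above as the statement being proved, i.e.\ the display with the factor $\frac12$ in front of the sum removed, and check it against $k=1,2$ as a sanity test.

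\textbf{Negative $k$.} Apply \eqref{parity} for even $j$ with $z=kt$, which gives $\varphi_2(-kt,t)=e^{(k+\frac12)t}\varphi_2((k+1)t,t)$. This carries a plus sign, whereas the statement has a minus. Positivity of $\varphi_2$ again confirms the plus sign is the correct one: both $\varphi_2(-kt,t)$ and $\varphi_2((k+1)t,t)$ are integrals of positive functions. I would record the relation in this corrected form and note the typographical sign.
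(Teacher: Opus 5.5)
Your proof is correct and follows the route the paper indicates: the paper only says these values follow from the recursion \eqref{varphijrec}, and your normalized sequence $a_k=(-1)^k e^{\frac12 k^2 t}\varphi_2(kt,t)$, built from $f_2(jt,t)=t(1+j^2t)$ and $\varphi_2(0,t)=\frac12 t$, is the explicit solution of that recursion. Both of your corrections are right. The sum must carry coefficient $1$ rather than $\frac12$, which the paper's own listed values $\varphi_2(t,t)=\frac12 t e^{-\frac12 t}$ and $\varphi_2(2t,t)=t(1+t)e^{-\frac32 t}-\frac12 t e^{-2t}$ confirm, since they agree with your formula and not with the display. The $k<0$ relation carries a plus sign, $\varphi_2(-kt,t)=e^{(k+\frac12)t}\varphi_2((k+1)t,t)$, exactly as \eqref{parity} gives for even $j$; the minus sign in the statement matches the odd-$j$ relations for $\varphi_1$ and $\varphi_3$ instead.
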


The first few evaluations are $\varphi_2(t, t) = \frac12 t e^{-\frac12 t}, 
\varphi_2(2t, t)=t(1+t) e^{-\frac32 t} - \frac12 t e^{-2t}$.

\begin{proposition}\label{prop:phi3}
For any $k\geq 0$ we have
\begin{align}
\varphi_3\Big((k + \frac12) t , t\Big) &= \frac18 (-1)^{k+1} e^{-\frac12 k(k+1) t} 
\sum_{j=0}^{k-1} (-1)^{j} t^2 (1+2j) (12+t + 4t j(j+1)) e^{\frac12 j(j+1) t}  \,,\quad
k \geq 1 \\
&= 0 \,,\hspace{11.5cm} k=0 \,.\nonumber
\end{align}
The values for $k<0$ are obtained from \eqref{parity} as
\begin{equation}
\varphi_3\Big((-k+\frac12) t, t\Big) = - e^{kt} \varphi_3\Big((k+\frac12)t , t\Big) \,.
\end{equation}
\end{proposition}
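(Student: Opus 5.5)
The plan is to derive the closed form directly from the recursion relation \eqref{varphijrec} with $j=3$, using $f_3(x,t)=x^3+3xt$ from Table~\ref{tab:1}, in the same spirit as Proposition~\ref{prop:2}.

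\emph{Starting value.} From $\varphi_3(z,t)=\frac12 e^{-\frac12 z+\frac18 t}g_3(z-\frac12 t,t)$ we get $\varphi_3(\frac12 t,t)=\frac12 e^{-\frac18 t}g_3(0,t)$. Since $g_3$ is odd in $z$, $g_3(0,t)=0$, which gives the case $k=0$.

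\emph{Linearizing the recursion.} Write $a_k:=\varphi_3((k+\frac12)t,t)$. Taking $z=(k+\frac12)t$ in \eqref{varphijrec} gives
\begin{equation*}
a_{k+1}=e^{-(k+1)t}\big(f_3((k+\tfrac12)t,t)-a_k\big).
\end{equation*}
The exponential and the sign flip are removed by the substitution $b_k:=(-1)^k e^{\frac12 k(k+1)t}a_k$. This works because $\frac12(k+1)(k+2)-(k+1)=\frac12 k(k+1)$. The recursion then becomes the first-order difference equation
\begin{equation*}
b_{k+1}=b_k+(-1)^{k+1}e^{\frac12 k(k+1)t}f_3((k+\tfrac12)t,t), \qquad b_0=0 .
\end{equation*}
Telescoping yields $b_k=\sum_{j=0}^{k-1}(-1)^{j+1}e^{\frac12 j(j+1)t}f_3((j+\frac12)t,t)$.

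\emph{Evaluating the source term.} The only real computation, and the step I would check most carefully, is the simplification of $f_3$ at the grid points. Substituting $j+\frac12=\frac{2j+1}{2}$ and $(j+\frac12)^2=\frac{4j(j+1)+1}{4}$ gives
\begin{equation*}
f_3((j+\tfrac12)t,t)=t^2(j+\tfrac12)\big(t(j+\tfrac12)^2+3\big)=\tfrac18 t^2(1+2j)\big(12+t+4tj(j+1)\big).
\end{equation*}
Undoing the substitution, $a_k=(-1)^k e^{-\frac12 k(k+1)t}b_k$. The combined sign is $(-1)^{k}(-1)^{j+1}=(-1)^{k+1}(-1)^j$, which reproduces the stated formula for $k\ge1$.

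\emph{Negative indices.} These follow from the odd-$j$ line of \eqref{parity}, $\varphi_3(-z,t)=-e^{z+\frac12 t}\varphi_3(z+t,t)$. Taking $z=(k-\frac12)t$ gives $\varphi_3((-k+\frac12)t,t)=-e^{kt}\varphi_3((k+\frac12)t,t)$.
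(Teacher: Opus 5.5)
Your proposal is correct and follows the paper's route. The paper gives no detailed proof and only states that the result follows from the recursion \eqref{varphijrec} with $f_3(x,t)=x^3+3xt$ and the starting value $\varphi_3(\frac12 t,t)=0$; your substitution $b_k=(-1)^k e^{\frac12 k(k+1)t}a_k$ followed by telescoping carries out exactly that computation, and the $k<0$ case comes from the odd-$j$ line of \eqref{parity} as in the paper. Incidentally, your formula at $k=2$ gives $\varphi_3(\frac52 t,t)=\frac38 t^2(12+9t)e^{-2t}-\frac18 t^2(12+t)e^{-3t}$, which shows that the paper's listed value for this point contains a typo.
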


The first few evaluations are $\varphi_3(\frac32t, t) = \frac18 t^2 e^{- t} (t+12), 
\varphi_3(\frac52t, t)=- \frac18 t^2(12+t) e^{-3t} + 3 t^2(12 + 9t) e^{-t} $.


\section{Relation to logistic-normal random variables}
\label{sec:5}

The random variable $X$ has a logistic-normal distribution $X \sim logitnorm(\mu,\sigma)$ if it has the form $X = \frac{1}{1+e^{-Z}}$ with $Z\sim N(\mu,\sigma)$
a normally distributed random variable with mean $\mu$ and standard deviation $\sigma$. The moments of $X$ can be expressed in terms of the 
integrals $\varphi_j(z,t)$. We give below the explicit results for the first few moments.

\begin{proposition}
Define $X \sim logitnorm(\mu,\sigma)$.
We have
\begin{align}\label{mom1}
\mathbb{E}[X] &= \varphi(-\mu,\sigma^2) \\
\label{mom2}
\mathbb{E}[X^2] &= 
(1 + \frac{\mu}{\sigma^2})  \varphi(-\mu, \sigma^2) 
+ \frac{1}{\sigma^2} \varphi_1(-\mu, \sigma^2) \,,\\
\label{mom3}
\mathbb{E}[X^3] &= 
\Big( (1 + \frac{\mu}{\sigma^2}) (1 + \frac{\mu}{2\sigma^2}) - \frac{1}{2\sigma^2}\Big) 
\varphi(-\mu, \sigma^2) \\
&+ \frac{1}{\sigma^2} \Big(\frac32 + \frac{\mu}{\sigma^2}\Big) \varphi_1(-\mu, \sigma^2) 
+ \frac{1}{2\sigma^4} \varphi_2(-\mu, \sigma^2)\,, \nonumber \\
\label{mom4}
\mathbb{E}[X^4] &= 
\Big( 1 + \frac{\mu^3}{6\sigma^6} - \frac{\mu}{2\sigma^4}
+ \frac{\mu^2}{\sigma^4} - \frac{1}{\sigma^2} + \frac{11\mu}{6\sigma^2}
\Big) 
\varphi(-\mu, \sigma^2) \\
&+ 
\Big( \frac{\mu^2}{2 \sigma^6} - \frac{1}{2 \sigma^4} + \frac{2 \mu}{\sigma^4} + \frac{11}{6 \sigma^2}
\Big) \varphi_1(-\mu, \sigma^2) +
\Big( \frac{\mu}{2 \sigma^6} + \frac{1}{\sigma^4} \Big) 
\varphi_2(-\mu, \sigma^2)
+ \frac{1}{6\sigma^6} \varphi_3(-\mu, \sigma^2)\,. \nonumber
\end{align}

\end{proposition}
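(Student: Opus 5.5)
The plan is to reduce every moment to the integrals $\varphi_j(-\mu,\sigma^2)$ using one algebraic identity for the logistic function and Gaussian integration by parts.

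\emph{First moment.} Set $Y=-Z\sim N(-\mu,\sigma^2)$ and $s(y)=\frac{1}{1+e^{y}}$, so that $X=s(Y)$. Then
\[
\mathbb{E}[X]=\int \frac{dy}{\sqrt{2\pi\sigma^2}}e^{-\frac{1}{2\sigma^2}(y+\mu)^2}s(y)=\varphi(-\mu,\sigma^2),
\]
which is (\ref{mom1}) directly from the definition (\ref{varphidef}). More generally, I will write
\[
M_{n,j}:=\mathbb{E}[Y^j s(Y)^n],
\]
so that $M_{1,j}=\varphi_j(-\mu,\sigma^2)$ and $\mathbb{E}[X^n]=M_{n,0}$.

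\emph{Algebraic identity.} Since $s'=-s(1-s)$, we have $(s^n)'=-n s^n(1-s)$, which gives
\[
s^{n+1}=s^n+\frac1n (s^n)'.
\]
Multiply by $Y^j$ and take expectations.

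\emph{Integration by parts.} For a smooth $h$ of polynomial growth, the Gaussian density $p(y)\propto e^{-(y+\mu)^2/(2\sigma^2)}$ satisfies $p'=-\frac{y+\mu}{\sigma^2}p$. Hence
\[
\mathbb{E}[Y^j (s^n)'(Y)]=\mathbb{E}\Big[\Big(\frac{Y+\mu}{\sigma^2}Y^j-jY^{j-1}\Big)s^n(Y)\Big].
\]
The boundary terms vanish because $s$ is bounded and the Gaussian decays. Combining this with the algebraic identity yields the recursion
\[
M_{n+1,j}=\Big(1+\frac{\mu}{n\sigma^2}\Big)M_{n,j}+\frac{1}{n\sigma^2}M_{n,j+1}-\frac{j}{n}M_{n,j-1},
\]
with the convention $M_{n,-1}=0$.

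\emph{Iteration.} Applying the recursion with $n=1$, $j=0$ gives
\[
M_{2,0}=\Big(1+\frac{\mu}{\sigma^2}\Big)\varphi+\frac{1}{\sigma^2}\varphi_1,
\]
which is (\ref{mom2}). For (\ref{mom3}) I first apply the recursion at $n=2$, $j=0$:
\[
M_{3,0}=\Big(1+\frac{\mu}{2\sigma^2}\Big)M_{2,0}+\frac{1}{2\sigma^2}M_{2,1}.
\]
Then I expand $M_{2,1}$ by the recursion at $n=1$, $j=1$:
\[
M_{2,1}=\Big(1+\frac{\mu}{\sigma^2}\Big)\varphi_1+\frac{1}{\sigma^2}\varphi_2-\varphi.
\]
The $-\varphi$ term produces the $-\frac{1}{2\sigma^2}\varphi$ contribution in (\ref{mom3}), and collecting terms gives the stated coefficients. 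For (\ref{mom4}) I proceed the same way:
\[
M_{4,0}=\Big(1+\frac{\mu}{3\sigma^2}\Big)M_{3,0}+\frac{1}{3\sigma^2}M_{3,1}.
\]
Here $M_{3,1}$ is obtained from $M_{2,1}$, $M_{2,2}$ and $M_{2,0}$. In turn, $M_{2,2}$ comes from $\varphi_1,\varphi_2,\varphi_3$ via the recursion at $n=1$, $j=2$.

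\emph{Main obstacle.} The only real difficulty is the bookkeeping in the fourth-moment expansion. I expect to organise it as a triangular table of $M_{n,j}$ with $n+j\le 4$. I will then check the final coefficients against the small-$\sigma$ limit and against the symmetric case $\mu=0$, where $\mathbb{E}[X]=\frac12$ by Proposition~\ref{prop:2}. This is a consistency check rather than part of the proof.
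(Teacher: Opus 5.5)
Your proof is correct. The recursion $M_{n+1,j}=(1+\frac{\mu}{n\sigma^2})M_{n,j}+\frac{1}{n\sigma^2}M_{n,j+1}-\frac{j}{n}M_{n,j-1}$ is right. Filling in the table reproduces exactly the coefficients in (\ref{mom3}) and (\ref{mom4}): for instance $M_{3,1}=-(\frac32+\frac{\mu}{\sigma^2})\varphi+(1+\frac{3\mu}{2\sigma^2}+\frac{\mu^2}{2\sigma^4}-\frac{3}{2\sigma^2})\varphi_1+\frac{1}{\sigma^2}(\frac32+\frac{\mu}{\sigma^2})\varphi_2+\frac{1}{2\sigma^4}\varphi_3$, and then $M_{4,0}=(1+\frac{\mu}{3\sigma^2})M_{3,0}+\frac{1}{3\sigma^2}M_{3,1}$ gives the stated fourth moment.

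The paper uses the same identity $(s^n)'=-ns^n(1-s)$ but organizes it differently. It works with the one-index recursion $p(\mathbb{E}[X^p]-\mathbb{E}[X^{p+1}])=\frac{d}{d\mu}\mathbb{E}[X^p]$. At each step it differentiates the already-obtained closed form of $\mathbb{E}[X^p]$ in the parameter $\mu$, evaluating $\partial_z\varphi_j$ by \eqref{dphi}. In that scheme the term $-\frac{1}{2\sigma^2}\varphi$ in (\ref{mom3}) comes from the product rule acting on the coefficient $(1+\frac{\mu}{\sigma^2})$. In yours it comes from the $-\frac{j}{n}M_{n,j-1}$ term, i.e.\ from differentiating $y^j$ in the integration by parts.

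The paper's route is shorter to write because it recycles \eqref{dphi}, which is itself Gaussian integration by parts in disguise. Yours trades differentiation of $\mu$-dependent coefficients for a closed linear recursion on the auxiliary mixed moments $\mathbb{E}[Y^jX^n]$. That makes it purely mechanical and easy to automate for arbitrary $p$, at the cost of carrying quantities the paper never needs.
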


\begin{proof}

The first moment is computed as
\begin{align}\label{firstmom}
\mathbb{E}[X]=\mathbb{E}\Big[\frac{1}{1+e^{-Z}} \Big] = 
\int_{-\infty}^\infty \frac{1}{1+e^{-\sigma z - \mu}} \phi(z) dz =
\varphi(-\mu,\sigma^2)
\end{align}
where $Z \sim N(\mu,\sigma)$ and $\phi(z) = \frac{1}{\sqrt{2\pi}} e^{-\frac12 z^2}$ is the standard normal density.
The connection between the first moment of the logit-normal random variable and the Mordell integral was noted also in \cite{Holmes2020}.

The second moment can be evaluated by taking one derivative of \eqref{firstmom} with respect to $\mu$
\begin{align}\label{dfirstmom}
\frac{d}{d\mu} \mathbb{E}[X] &=
\int_{-\infty}^\infty \frac{e^{-\sigma z - \mu}}{(1+e^{-\sigma z - \mu})^2} \phi(z) dz \\
&=
\int_{-\infty}^\infty \Big(
\frac{1}{1+e^{-\sigma z - \mu}} - \frac{1}{(1+e^{-\sigma z - \mu})^2}\Big) \phi(z) dz
= \mathbb{E}[X] - \mathbb{E}[X^2] \,. \nonumber
\end{align}
The derivative in \eqref{dfirstmom} is evaluated in terms of $\varphi, \varphi_1$ using \eqref{dphi} which gives
\begin{align}\label{X2mom}
\mathbb{E}[X^2] &= \mathbb{E}[X] - \frac{d}{d\mu} \mathbb{E}[X]\\
&= \varphi(-\mu,\sigma^2) + \varphi'(-\mu,\sigma^2) =
(1 + \frac{\mu}{\sigma^2})  \varphi(-\mu, \sigma^2) 
+ \frac{1}{\sigma^2} \varphi_1(-\mu, \sigma^2) \,. \nonumber
\end{align}

The higher moments of $X$ can be computed recursively 
using the recursion relation \cite{Holmes2020} 
\begin{equation}\label{Xmomrec}
p (\mathbb{E}[X^p] - \mathbb{E}[X^{p+1}]) = \frac{d}{d\mu} \mathbb{E}[X^p]\,.
\end{equation} 
which is proved analogously to \eqref{dfirstmom}. 
The derivatives $\partial_z\varphi_j(z,t)$ are evaluated in terms of the generalized logistic-normal integrals using the relation \eqref{dphi}.
\end{proof}

Using this recursion approach all positive integer moments $\mathbb{E}[X^p]$ can
be expressed in terms of $\varphi_j(-\mu,\sigma^2)$ with $j=0,1,\cdots, p-1$.
The numerical evaluation of the moments 
$\mathbb{E}[X^p]$ can be made either by interpolation from exact results
for $\varphi_j(-\mu,\sigma^2)$, on their respective grids, 
or using the Poisson series expansions \eqref{PoissonTheta2} to evaluate $\varphi(-\mu,\sigma^2)$ and the higher order logistic-normal integrals by finite differences
(an explicit series expansion \eqref{g1Poisson} is available for $\varphi_1(-\mu,\sigma^2)$).
We illustrate both these approaches in the next section with numerical examples. 

\section{Numerical implementation and tests}
\label{sec:6}

The series expansions for the logistic-normal integral in Proposition~\ref{propPoisson} contain Jacobi theta functions $\vartheta_j(z,q)$. These functions are available in many software languages, for example $\vartheta_j(z,q)$ is evaluated as \texttt{EllipticTheta[j,z,q]} in \textit{Mathematica},
and as \texttt{jtheta(j,z,q)} in the \texttt{mpmath} library in Python\footnote{https:/\!/mpmath.org}. Alternatively, they can be evaluated directly from their series expansions, given below in
(\ref{theta2def}) and (\ref{theta4def}).
The series expansions on the right-hand side of 
Proposition~\ref{propPoisson} 
must be truncated to a finite order. This requires some care, as we show next. 

\begin{definition}\label{def:trunc}
i) Define the truncation of the series (\ref{PoissonTheta4}) as
\begin{eqnarray}\label{PoissonTheta4N}
&& \vartheta_4(\frac{i}{2}z, e^{-\frac12 t})
g_N(z,t)  = 2S_1(z,q;N) + \frac{4\pi}{t} S_2(z,q_1;N)
\end{eqnarray}
with $q = e^{-\frac12 t}, q_1 = e^{-\frac{2\pi^2}{t}}$, and
\begin{eqnarray}
&& S_1(z,q;N) := \sum_{n=-(N-1)}^N (-1)^n e^{ (n-\frac12) z}
\frac{q^{n^2-\frac14}}{1 - q^{2n-1}} \nonumber \\
&& S_2(z,q;N) :=
\sum_{n=-N}^N  \exp\Big( \frac{2\pi in z}{t} \Big)
\frac{q^{n^2+n}}{1 + q_1^{2n}} 
= \sum_{n=-N}^N  \cos\Big( \frac{2\pi n z}{t} \Big)
\frac{q^{n^2+n}}{1 + q_1^{2n}} \nonumber
\end{eqnarray}

ii) Define the truncation of the series (\ref{PoissonTheta2}) as
\begin{eqnarray}\label{PoissonTheta2N}
\vartheta_2(\frac{i}{2}z, e^{-\frac12 t}) g_N(z,t) 
 = 
\frac{4\pi}{t} S_3(z,q_1;N) + 2 S_4(z,q;N)
\end{eqnarray}
with $q = e^{-\frac12 t}, q_1 = e^{-\frac{2\pi^2}{t}}$, and
\begin{eqnarray}
&& S_3(z,q;N) := \sum_{n=-(N-1)}^N (-1)^n e^{-\frac{2\pi i}{t} (n-\frac12) z}
\frac{q_1^{n^2-\frac14}}{1 - q_1^{2n-1}} \nonumber \\
&& = 
\sum_{n=-(N-1)}^N (-1)^n \cos\Big(\frac{2\pi }{t} (n-\frac12) z\Big)
\frac{q_1^{n^2-\frac14}}{1 - q_1^{2n-1}} \nonumber \\
&& S_4(z,q;N) := \sum_{n=-N}^N  e^{nz}
\frac{q^{n^2+n}}{1 + q^{2n}}\,.\nonumber
\end{eqnarray}
\end{definition}

This truncation preserves the symmetry properties of the infinite sums  under $
z\to -z$. In $S_1(z,q;N)$ and $S_3(z,q;N)$ the terms with 
$n=(0,1), (-1,2), \cdots$ are related by the  exchange $n\to -(n-1)$.
Keeping both terms in each pair is necessary in order to ensure
the even property in $z$ of these sums. Explicitly, we have for $S_1(z,q;N)$ 
\begin{eqnarray}
S_1(-z,q ;N) &=& \sum_{n=-(N-1)}^N (-1)^n  e^{-(n-\frac12)z}
\frac{q^{n^2-\frac14}}{1-q^{2n-1}}  \\
&=& \sum_{m=-(N-1)}^N (-1)^{1-m}  e^{(m-\frac12)z}
\frac{q^{(1-m)^2-\frac14}}{1-q^{1-2m}} \nonumber \\
&=&
\sum_{m=-(N-1)}^N (-1)^{m-1}  e^{(m-\frac12)z}
\frac{q^{m^2-\frac14}}{1-q^{1-2m}} q^{1-2m} = S_1(z,q;N)\,.\nonumber
\end{eqnarray}
In the second line we denoted $m=-(n-1)$ running from $N$ to $-(N-1)$.
 
A similar property holds for $S_2(z,q;N)$ and $S_4(z,q;N)$ where the
terms with indices $n$ and $-n$ are both required for the even property in $z$.
Truncating the sums as shown above preserves this symmetry.

\begin{figure}[h]
    \centering
   \includegraphics[width=2.5in]{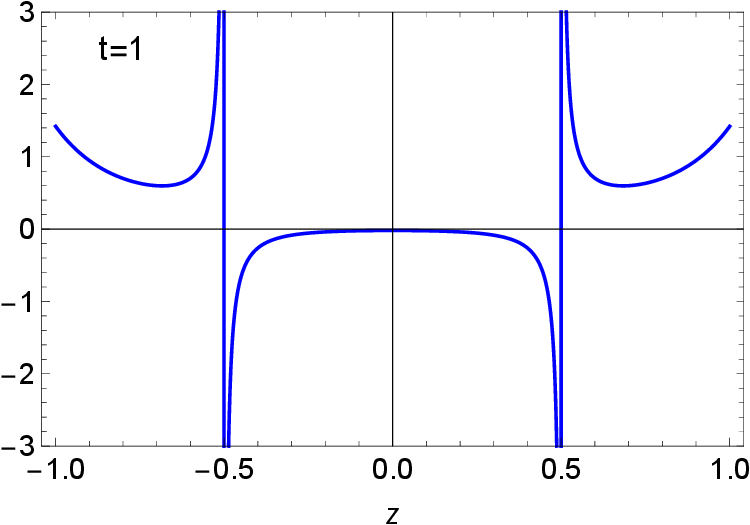}
   \includegraphics[width=2.5in]{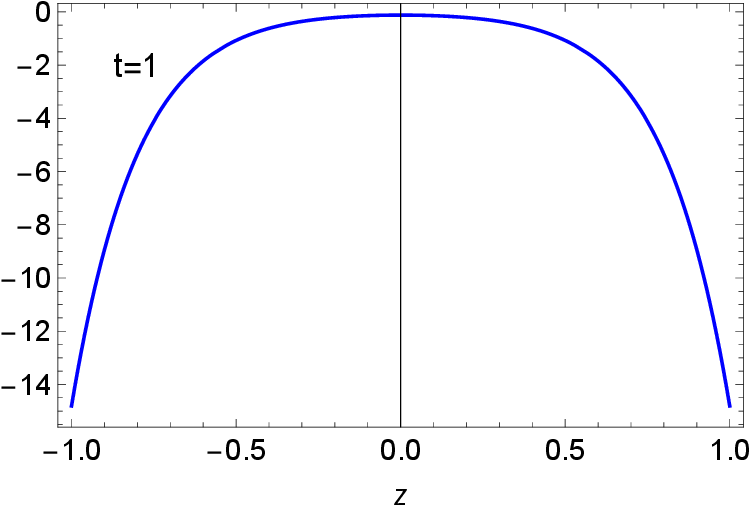}
    \caption{
The approximation error $\Delta_N g(z,t) =g_N(z,t) - g(z,t)$
of the series (\ref{PoissonTheta4N}) (left) 
and (\ref{PoissonTheta2N}) (right) vs $z$ at $t=1$, truncated with
$N=5$. The benchmark for $g(z,t)$ is trapezoidal quadrature (\ref{Trapez})
with step $h=0.1$ and $N_q=100$. Left: $10^4 \cdot \Delta_N g(z,t)$ and right:
$10^8 \cdot \Delta_N g(z,t)$.
}
\label{Fig:1}
 \end{figure}

The relative numerical performances of the two series
(\ref{PoissonTheta4N}) and (\ref{PoissonTheta2N}) are compared in 
Figure~\ref{Fig:1} for $t=1$. 
The benchmark for this test is trapezoidal quadrature with step $h$
\begin{equation}\label{Trapez}
g(z,t) = h \sum_{k=-N_q}^{N_q} \frac{1}{\sqrt{2\pi t}} e^{-\frac{1}{2t}(z-hk)^2}
\frac{1}{\cosh(h k z/2)}
+ E(h)
\end{equation}
The quadrature step was chosen $h=0.1$, which gives the error 
bound $|E(h)| \leq 10^{-42}$ for all $t>1$, see \cite{CS,DP} for the error bound expression.

The left plot in Figure~\ref{Fig:1} shows that the series 
(\ref{PoissonTheta4N})
introduces noise around the points $z_k= (k+\frac12)t$ due to the $0/0$
phenomenon noted above. On the other hand, the evaluation using
the series (\ref{PoissonTheta2N}) is much more stable for values sufficiently 
close to the origin.


The numerical evaluation of (\ref{PoissonTheta2N}) for $|z| \gg t/2$ 
involves the division of two very small numbers, which introduces large 
errors. This can be avoided by computing $g(z,t)$ for 
$z\in (-\frac12 t, \frac12 t)$ using (\ref{PoissonTheta2N}), and 
defining $g(z,t)$ outside this interval
by repeated application of the relation (\ref{Mordell2g}). The 
error of the resulting approximation for $g(z,t)$ is bounded by the following 
result, see \cite{DP}.

\begin{proposition}\label{properr}
Denote $\bar g(z,t)$ any approximation of $g(z,t)$ defined on the interval 
$z:(-\frac12 t, \frac12 t)$, and by repeated application of (\ref{Mordell2g}) 
outside of this interval.
The approximation error $\Delta g(z,t) = g(z,t) - \bar g(z,t)$ is bounded as
\begin{eqnarray}\label{errbound}
|\Delta g(z,t)| \leq e^{-\frac{1}{2t} z^2 + \frac18 t} 
\sup_{-\frac12 t\leq q\leq \frac12 t} |\Delta g(z,t)| \,.
\end{eqnarray}
\end{proposition}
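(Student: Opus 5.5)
The plan is to show that the error $\Delta g$ satisfies a \emph{homogeneous} version of the quasi-periodicity relation (\ref{Mordell2g}). After multiplication by a Gaussian factor this relation turns into exact periodicity up to sign, which transports the bound from the central interval to all of $\mathbb{R}$. (I read the supremum in (\ref{errbound}) as $\sup_{-\frac12 t\le q\le \frac12 t}|\Delta g(q,t)|$, i.e. over the argument $q$ in the fundamental interval.)

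\emph{Step 1: homogeneous relation for the error.} By construction, $\bar g$ is extended outside $(-\frac12 t,\frac12 t)$ by repeated application of (\ref{Mordell2g}). So $\bar g$ satisfies
$e^{-\frac12 x}\bar g(x-\frac12 t,t)+e^{\frac12 x}\bar g(x+\frac12 t,t)=2e^{-\frac18 t}$
for all real $x$. The exact function $g$ satisfies the same relation with the same right-hand side. Subtracting the two gives
\begin{equation*}
\Delta g\big(x+\tfrac12 t,t\big) = -e^{-x}\,\Delta g\big(x-\tfrac12 t,t\big)\,,\qquad x\in\mathbb{R}.
\end{equation*}

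\emph{Step 2: Gaussian rescaling.} Define $h(z):=e^{\frac{z^2}{2t}}\Delta g(z,t)$. Substituting into the relation of Step 1 gives
\begin{equation*}
h\big(x+\tfrac12 t\big)=-\exp\Big(\tfrac{(x+\frac12 t)^2-(x-\frac12 t)^2}{2t}-x\Big)h\big(x-\tfrac12 t\big)=-h\big(x-\tfrac12 t\big),
\end{equation*}
because the exponent equals $\frac{2xt}{2t}-x=0$. Hence $h(z+t)=-h(z)$, and $|h|$ is $t$-periodic.

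\emph{Step 3: the bound.} Take any $z\in\mathbb{R}$ and write $z=z_0+kt$ with $z_0\in[-\frac12 t,\frac12 t]$ and $k\in\mathbb{Z}$. By Step 2, $|h(z)|=|h(z_0)|$. Then
\begin{equation*}
|\Delta g(z,t)| = e^{-\frac{z^2}{2t}}|h(z_0)| = e^{-\frac{z^2}{2t}}e^{\frac{z_0^2}{2t}}|\Delta g(z_0,t)| \le e^{-\frac{z^2}{2t}+\frac18 t}\sup_{-\frac12 t\le q\le\frac12 t}|\Delta g(q,t)|,
\end{equation*}
where the last step uses $z_0^2\le t^2/4$. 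This is exactly (\ref{errbound}).

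The only delicate point is the bookkeeping at the endpoints $\pm\frac12 t$. The recursion may define $\bar g$ there either as a limit from the open interval or through (\ref{Mordell2g}), and in either case the homogeneous relation of Step 1 holds wherever both sides are defined. Taking the supremum over the closed interval, as in the statement, covers both conventions. I expect no other obstacle: once the Gaussian rescaling of Step 2 is spotted, the rest is a one-line computation.
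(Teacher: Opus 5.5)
Your proof is correct. The paper gives no argument of its own here (it defers to \cite{DP}), and yours is the natural route: the error obeys the homogeneous form of (\ref{Mordell2g}), $\Delta g(z+t,t)=-e^{-z-\frac12 t}\Delta g(z,t)$, whose $k$-fold iterate $\Delta g(z_0+kt,t)=(-1)^k e^{-\frac{1}{2t}((z_0+kt)^2-z_0^2)}\Delta g(z_0,t)$ is exactly the antiperiodicity of your rescaled $h$. You read the supremum correctly as $\sup_{q}|\Delta g(q,t)|$, and your argument in fact gives the sharper identity $|\Delta g(z,t)|=e^{-\frac{1}{2t}(z^2-z_0^2)}|\Delta g(z_0,t)|$, from which the stated bound follows via $z_0^2\le t^2/4$.
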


Assume that the approximation error in $x\in [-t/2,t/2]$ is below a 
prescribed level $\varepsilon$.
Then Proposition~\ref{properr} bounds the error of the resulting approximation 
for $g(x,t)$ for any $x\in \mathbb{R}$ as $|\Delta g(z,t)| \leq 
\min(\varepsilon e^{-\frac{1}{2t}z^2+\frac18 t}, \varepsilon)$. 
In Appendix~\ref{sec:C} we
present an explicit algorithm implementing this method. A similar method
was used in \cite{GSM} for the evaluation of the logistic-normal integral 
$\varphi(z,t)$, using the approximation of \cite{MonahanStefanski}
as starting point in the primitive cell $(-t/2,+t/2)$.

Figure~\ref{Fig:2} shows the truncation error 
$\log_{10}|g_N(z,t) - g_{\rm T}(z,t)|$ computed also with respect to a
benchmark evaluation using trapezoidal quadrature (\ref{Trapez}).
The error decreases rapidly with the truncation order $N$.
\begin{figure}[h]
\centering
\includegraphics[width=2.5in]{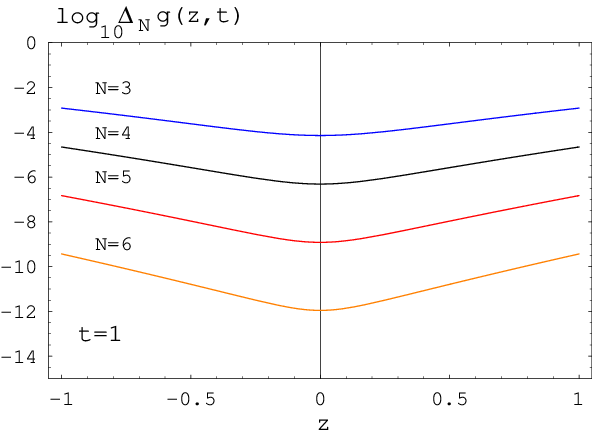}
\caption{
The truncation error $\log_{10}|g_N(z,t) - g_T(z,t)|$ 
of the series (\ref{PoissonTheta2N}) 
at $t=1$ evaluated by truncation with $N=3,4,5,6$.
The benchmark is trapezoidal quadrature (\ref{Trapez})
with step $h=0.1$.}
\label{Fig:2}
 \end{figure}

\begin{figure}[h]
    \centering
   \includegraphics[width=2.5in]{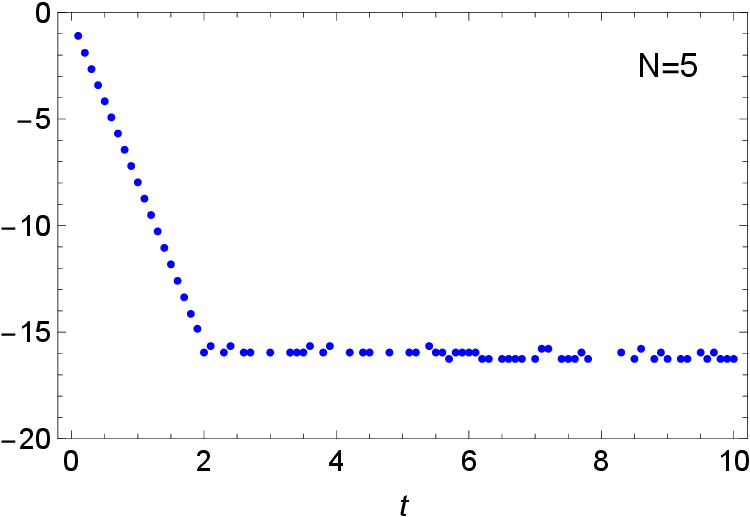}
   \includegraphics[width=2.5in]{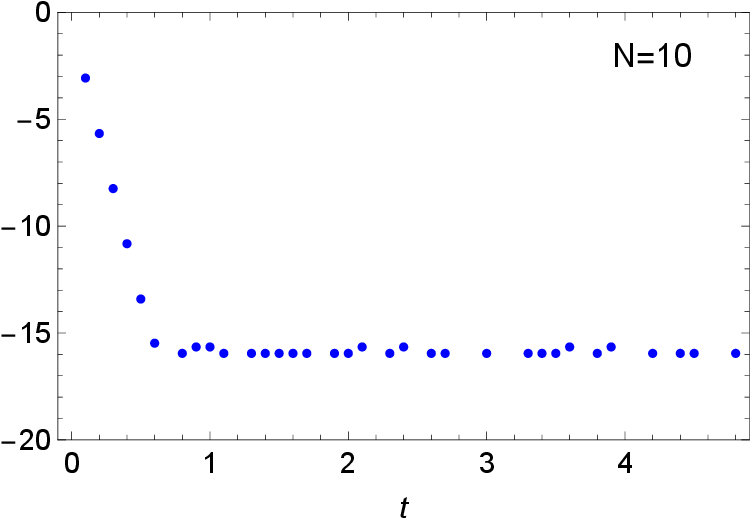}
    \caption{
The truncation error $\log_{10}|g_N(\frac12 t,t) - e^{-\frac18 t}|$ 
of the series (\ref{PoissonTheta2N}) 
vs $t$ truncating the series to $N=5$ terms (left) and $N=10$ (right).
}
\label{Fig:3}
 \end{figure}

The truncation error of the series (\ref{PoissonTheta2N}) can be estimated
by comparing the evaluation of this series at $x=\frac12 t$ against the exact result
$g(\frac12 t,t) = e^{-\frac18 t}$. The results are shown in Figure~\ref{Fig:3},
which plots $\log_{10}|g_N(z=\frac12 t,t)-e^{-\frac18 t} |$.
The error decreases with $t$ and approaches a floor at about $\sim 10^{-16}$,
corresponding to machine precision (double-precision, floating point
accuracy in \textit{Mathematica}).
Truncating the series at $N=10$ terms gives an approximation
error of the order of $\sim 10^{-16}$ over a wide range of $t$. However, for 
small $t < 0.5$, the Poisson series expansion
converges more slowly and the truncation error becomes large as $t$ decreases.

In the small-$t$ region we propose to evaluate $\varphi(z,t)$ by interpolation
from the exactly known values on the grid $z_k=k t$, see Proposition~\ref{prop:2}.
As $t\to 0$, the grid step decreases to zero and the interpolation is expected to become very precise. 
A similar approach can be used for the higher order integrals $\varphi_j(z,t)$, which can be evaluated on their respective grids, as shown in Table~\ref{tab:1}.
The exact evaluations for $\varphi_j(z,t)$ with $j=1,2,3$ are given in Propositions~\ref{prop:phi1},
\ref{prop:phi2} and \ref{prop:phi3}, respectively.

We illustrate the application of the interpolation method to the evaluation of first two moments of a logistic-normal 
random variable $X \sim \mbox{logitnorm}(\mu, \sigma)$. They are related to the logistic-normal integrals $\varphi, \varphi_1$ as in \eqref{mom1} and \eqref{mom2}, respectively.

Numerical evaluations of $\mathbb{E}[X]$ and $\mathbb{E}[X^2]$ 
for $\mu = 1$ and several values of $\sigma$ 
are shown in Table~\ref{tab:23}. The interpolation method is applied separately to $\varphi(z,t)$ and $\varphi_1(z,t)$ and the results are combined. For evaluating $\varphi(-\mu,\sigma^2)$, first find $k\in \mathbb{Z}$ such that 
$k \sigma^2 \leq -\mu \leq (k+1)\sigma^2$ and then interpolate $\varphi(-\mu,\sigma^2)$ from the exactly known values at the ends of the interval. A similar approach is used to obtain $\varphi_1(-\mu,\sigma^2)$ from its exactly known values $\varphi_1((k+\frac12) \sigma^2, \sigma^2)$ and 
$\varphi_1((k+\frac32) \sigma^2,\sigma^2)$ with $k\in \mathbb{Z}$. For simplicity we use linear interpolation. 
We report also the error bound\footnote{Denote $\bar f(x)$ the linear interpolation of a function $f(x)$ from its values at the end points of the interval $[a,b]$. 
The interpolation error is bounded as $|\bar f(x) - f(x)| \leq 
\delta :=\frac18 M_2 (b-a)^2$ with $M_2 = \sup_{a\leq x\leq b} |f''(x)|$ the supremum of the second derivative taken over the interpolation domain. } 
of the linear interpolation $\delta$.

The evaluation using the series expansion with fixed truncation order $N=12$ breaks down for small $\sigma$, while the evaluation by interpolation from the exactly known values gives very precise results. 
 These results suggest a hybrid approach for the numerical evaluation of the moments: 
for sufficiently small $\sigma<0.1$ use the interpolation method, while for $\sigma>0.1$ use the series expansion method based on \eqref{PoissonTheta2} and extrapolation based on the recursion method of Proposition~\ref{properr}.


\begin{table}[h!]
\begin{center}
\caption{Numerical testing for the first two moments of a logistic-normal random variable
$X\sim \mbox{logitnorm}(\mu,\sigma)$ with $\mu=1$ and several values of $\sigma$. The table shows the results obtained from the Poisson series expansion truncated to $N=12$ terms, linear interpolation from the grid of exactly known values, the bound on the interpolation error, and 
the result obtained from numerical integration. 
} \label{tab:23}
{\begin{tabular}{|l|cccc|}
\hline
 & \multicolumn{4}{|c|}{$\mathbb{E}[X]$}  \\
 \hline
$\sigma$	& Poisson $(N=12)$ & Interpolation & Interp Err $\delta$ & Num Int \\ 
\hline
1.00 & 0.696735 & 0.696735 & 0.00 & 0.696735 \\
0.50 & 0.720581 & 0.720581 & 0.00 & 0.720581 \\
0.10 & NaN & 0.730606 & 1.1E-06 & 0.730606 \\
0.05 & NaN & 0.730945 & 7.1E-08 & 0.730945 \\
0.01 & NaN & 0.731054 & 0.00 & 0.731054 \\
\hline
0 & & & & $1/(1+e^{-1})= 0.731059$ \\
\hline
\hline
 & \multicolumn{4}{|c|}{$\mathbb{E}[X^2]$}  \\
 \hline
$\sigma$	& Poisson $(N=12)$ & Interpolation & Interp Err $\delta$ & Num Int \\ 
\hline
\hline
1.00 & 0.518789 & 0.582174 & 0.031752 &  0.518791 \\
0.50 & 0.528836 & 0.551173 & 0.019577 & 0.528640 \\
0.10 & NaN & 0.535193 & 0.001329 & 0.534171 \\
0.05 & NaN & 0.534634 & 0.000336 & 0.534377 \\
0.01 & NaN & 0.534454 & 0.000010 & 0.534444 \\
\hline
0 & & & & $1/(1+e^{-1})^2= 0.534447$ \\
\hline
\end{tabular}}
\end{center}
\end{table}

\section{Summary}

The logistic-normal integral and its derivatives satisfy a large number of symmetry relations which lead to exact evaluations and series expansions. While one particular series has been presented in the literature, see \cite{Mordell1933} and \cite{Johnson}, it is less appreciated that there exists a continuum of such series, with different stability properties under numerical evaluation.
We point out that there exists an optimal choice in this continuum which is best suited for numerical evaluation, and give practical details for its application. 

We discuss also a class of generalizations of the logistic-normal integral, which are relevant for the computation of its derivatives. The logistic-normal integral and its generalizations can be evaluated in closed form on grids of uniformly spaced points. These evaluations follow from recursion relations satisfied by these integrals. 
We give explicit solutions of these recursions which can be used for efficient evaluation of the exact values. The exact values on the grids can be used to construct interpolations for the logistic-normal integral and its generalizations. 

As an application we discuss the evaluation of the first few moments of a logistic-normal random variable $X \sim \mbox{logitnorm}(\mu,\sigma)$, which can be expressed in terms of the logistic-normal integral and its generalizations.
We present numerical tests for the numerical evaluation of the first two moments and compare the performance of the series expansions and of the interpolation from exactly known values. These two methods complement each other in different regimes of the parameter $\sigma$: for small $\sigma$ the interpolation method is optimal, while for large $\sigma$ the series expansion 
converges rapidly and gives precise results.

\appendix


\section{Relation to the Mordell integral}
\label{sec:app.A}

The logistic-normal integral is related to the Mordell integral, 
which was introduced in the context of 
analytical number theory by L.~J.~Mordell in \cite{Mordell1920,Mordell1933}.
Following the notation of \cite{Zwegers}, it can be defined as follows, 
with $z \in \mathbb{C}$ and
$\tau \in \mathcal{H}$ and
$\mathcal{H} = \{z = x+ iy; y > 0\}$
\begin{eqnarray}\label{Mordelldef}
h(z;\tau)=\int_{-\infty}^\infty dx \frac{e^{i\pi \tau x^2 - 2\pi z x}}
{\cosh \pi x}\,.
\end{eqnarray}
This integral plays an important role in the theory of the modular
forms, and has been studied extensively in relation to the mock theta
functions in \cite{Andrews,Zwegers,CR}. 

The function $g(x,t)$ is related to the Mordell integral of  imaginary $\tau$ as
\begin{eqnarray}\label{g2h}
g(z,t) = \sqrt{\frac{2\pi}{t}} e^{-\frac{1}{2t}z^2} 
h\left(\frac{z}{t}, i\frac{2\pi}{t}\right)\,.
\end{eqnarray}

The Mordell integral satisfies a large number of symmetry relations 
which were proved in \cite{Mordell1933}. We list a subset of these relations below, 
following the notations of the Proposition 1.2 in \cite{Zwegers}.

\begin{proposition}[\cite{Mordell1933}]
The function $h(z;\tau)$ is doubly-quasiperiodic in the $z$ argument, with 
periods $(1,\tau)$
\begin{eqnarray}\label{Mprop1}
\mbox{(1)} && \qquad h(z;\tau) + h(z+1;\tau) = \frac{2}{\sqrt{-i\tau}} 
e^{\frac{\pi i}{\tau}(z+\frac12)^2} \\
\label{Mprop2}
\mbox{(2)} && \qquad h(z;\tau) + e^{-2\pi i z - \pi i \tau}h(z+\tau;\tau) = 
2e^{-\pi i z - \pi i \tau/4} \,.
\end{eqnarray}

(3) $z\to h(z;\tau)$ is the unique holomorphic function satisfying (1) and (2).

It is an even function of the first argument 
\begin{eqnarray}\label{Mordell4}
\mbox{(4)} && \qquad h(-z;\tau) = h(z;\tau)\,.
\end{eqnarray}

Under modular transformations in the second argument it changes as
\begin{eqnarray}\label{Mordell5}
\mbox{(5)} && \qquad h\left(\frac{z}{\tau}; -\frac{1}{\tau}\right) = 
\sqrt{-i\tau} e^{-\pi i \frac{z^2}{\tau}} h(z; \tau)\,.
\end{eqnarray}
\end{proposition}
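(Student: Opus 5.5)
We would treat the five items in the order (4), (1), (2), (3), (5). Throughout we use that for $\tau\in\mathcal{H}$ the factor $e^{i\pi\tau x^2}$ decays like a Gaussian on horizontal lines. This decay justifies every convergence and contour-shift argument below.

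\emph{Evenness and the two quasi-periodicities.}
Property (4) follows from the substitution $x\to -x$ in (\ref{Mordelldef}), since $\cosh\pi x$ is even.

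For (1), we add the integrands of $h(z;\tau)$ and $h(z+1;\tau)$. Using $(1+e^{-2\pi x})/\cosh\pi x = 2e^{-\pi x}$, the sum becomes the Gaussian integral
$$2\int_{-\infty}^\infty e^{i\pi\tau x^2-2\pi(z+\frac12)x}\,dx .$$
Completing the square gives $\frac{2}{\sqrt{-i\tau}}e^{\frac{\pi i}{\tau}(z+\frac12)^2}$, where $\sqrt{-i\tau}$ is the principal branch, positive for $\tau\in i\mathbb{R}_+$.

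For (2), we integrate the integrand of $h(z;\tau)$ around the rectangle with horizontal sides $\mathrm{Im}\,x=0$ and $\mathrm{Im}\,x=1$; the vertical sides vanish by Gaussian decay. On the upper side, $\cosh\pi(x+i)=-\cosh\pi x$ and
$$e^{i\pi\tau(x+i)^2-2\pi z(x+i)} = e^{-i\pi\tau-2\pi i z}\,e^{i\pi\tau x^2-2\pi(z+\tau)x}.$$
So the top side contributes $-e^{-2\pi i z-\pi i\tau}h(z+\tau;\tau)$. The only pole inside is at $x=i/2$, with residue $e^{-i\pi\tau/4-\pi i z}/(i\pi)$. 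The residue theorem then gives (2): $2\pi i$ times this residue is exactly $2e^{-\pi i z-\pi i\tau/4}$.

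\emph{Uniqueness (3).}
Let $f$ be the difference of two entire solutions of (1) and (2). Then $f(z+1)=-f(z)$ and $f(z+\tau)=-e^{2\pi i z+\pi i\tau}f(z)$. Antiperiodicity gives a Fourier expansion $f(z)=\sum_n a_n e^{(2n+1)\pi i z}$, convergent for all $z$ because $f$ is entire. Comparing coefficients in the second relation gives $a_n=-e^{-2\pi i n\tau}a_{n-1}$, hence $|a_n|=|a_0|\,|e^{\pi i\tau}|^{-n(n+1)}$. This grows superexponentially since $\mathrm{Im}\,\tau>0$. It is incompatible with convergence of the Fourier series on any horizontal line, since an entire function's Fourier coefficients satisfy $|a_n|\le C_y e^{(2n+1)\pi y}$ for every $y$. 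Hence $a_0=0$, so all $a_n=0$ and $f\equiv 0$.

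\emph{Modular transformation (5).}
Here we use that $1/\cosh\pi x$ is its own Fourier transform,
$$\frac{1}{\cosh\pi x}=\int_{-\infty}^\infty \frac{e^{2\pi i xy}}{\cosh\pi y}\,dy .$$
We insert this into $h(z/\tau;-1/\tau)$, exchange the order of integration (Fubini, justified by Gaussian decay in $x$), and evaluate the inner Gaussian integral in $x$. The result is $\sqrt{-i\tau}\,e^{-\pi i z^2/\tau}$ times an integral which, after the change of variable $y\to\pm y$, is exactly $h(z;\tau)$.

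As a cross-check one could instead verify that $\sqrt{-i\tau}\,e^{-\pi i z^2/\tau}h(z;\tau)$ satisfies the analogues of (1) and (2) for the pair $(z/\tau,-1/\tau)$ and invoke (3). This exchanges the two quasi-periods and is a short computation.

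\emph{Main obstacle.}
The delicate part is bookkeeping rather than ideas. In (5) one must fix the branch of $\sqrt{-i\tau}$ consistently with the Gaussian integral, and track the signs in the completed square, for general $\tau\in\mathcal{H}$. We would first verify everything for $\tau\in i\mathbb{R}_+$, which is the case used in (\ref{g2h}), and then extend to all of $\mathcal{H}$ by analytic continuation in $\tau$.
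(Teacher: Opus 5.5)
Your proof is correct. The paper gives no proof of this proposition: it quotes it from \cite{Mordell1933}, in the form of Proposition~1.2 of \cite{Zwegers}, and uses it only through its consequences. So your write-up supplies an argument the paper omits rather than paralleling one.

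It does connect with what the paper proves. Via \eqref{g2h}, item (1) is exactly \eqref{Mordell2g}, and your identity $(1+e^{-2\pi x})/\cosh\pi x=2e^{-\pi x}$ is the same elementary fact, $\frac{1}{1+e^{y}}+\frac{e^{y}}{1+e^{y}}=1$, that underlies the recursion \eqref{LNrec}. Item (5) at $\tau\in i\mathbb{R}_+$, combined with (4), is \eqref{Mordell5g}. Your Fubini argument with the self-dual $1/\cosh\pi x$ is the mechanism of the paper's proof of Proposition~\ref{prop:1}, now carried out for all $\tau\in\mathcal{H}$.

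The obstacle you anticipate in (5) does not arise. Neither the continuation from $i\mathbb{R}_+$ nor the substitution $y\to -y$ is needed. The inner Gaussian integral directly gives $\sqrt{-i\tau}\,e^{-\pi i z^2/\tau}e^{\pi i\tau y^2-2\pi z y}$, because $\sqrt{i/\tau}=1/\sqrt{-i\tau}$ for principal branches when $\mathrm{Re}(-i\tau)>0$.

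For (3), the more common route multiplies $f$ by $\vartheta_1(\pi z;e^{\pi i\tau})$. Its multipliers under $z\to z+1$ and $z\to z+\tau$ are inverse to those of $f$, so the product is an entire elliptic function. It is therefore constant by Liouville, and zero because $\vartheta_1(0)=0$. Your Fourier-coefficient growth argument is equally rigorous and avoids theta functions. Because you derive (4) and (5) directly from the integral, they do not depend on (3). Your cross-check (verify the functional equations, then invoke uniqueness) is the shortest route if (3) is taken as the primary tool.

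One bookkeeping point in (2). With counterclockwise orientation the top side contributes $+e^{-2\pi i z-\pi i\tau}h(z+\tau;\tau)$. The value $-e^{-2\pi i z-\pi i\tau}h(z+\tau;\tau)$ you state is $\int_{\mathbb{R}+i}$ taken left to right. It is the former sign that yields (2).
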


It is easy to see that the symmetry relation (1) is equivalent to the 
relation (\ref{Mordell2g}), and the relation (5) is equivalent to 
(\ref{Mordell5g}).

\section{Proofs}
\label{sec:app.B}

\begin{proof}[Proof of Proposition~\ref{prop:2}]
Consider the alternating geometric sum
\begin{eqnarray}\label{geomsum}
\sum_{k=0}^n (-1)^k e^{kx} = \frac{1 - (-1)^{n+1}e^{(n+1)x}}{1 + e^x} =
\frac12 e^{-\frac12 x} \frac{1 - (-1)^{n+1}e^{(n+1)x}}{\cosh(x/2)} \,.
\end{eqnarray}
Multiply both sides with $\frac{1}{\sqrt{2\pi t}} e^{-\frac{1}{2t}x^2}$ 
and integrate over $x$. This gives
\begin{eqnarray}
\sum_{k=0}^n (-1)^k e^{\frac12 k^2 t} = \frac12 e^{\frac18 t}g(-\frac12 t, t) -
(-1)^{n+1} \frac12 e^{\frac12(n+\frac12)^2 t} g\Big(( n + \frac12) t, t\Big)\,.
\end{eqnarray}
Using $g\left(-\frac12 t, t\right) = e^{-\frac18 t}$, see (\ref{gatt2}), gives
\begin{eqnarray}
\sum_{k=0}^n (-1)^k e^{\frac12 k^2 t} = \frac12 -
(-1)^{n+1} \frac12 e^{\frac12(n+\frac12)^2 t} g\Big( (n + \frac12) t, t\Big)\,,
\end{eqnarray}
and thus
\begin{align}\label{GaussSum10}
g\Big((n+\frac12)t, t\Big) &= 
2 (-1)^n e^{-\frac12(n+\frac12)^2 t} \Big(
\sum_{k=0}^n (-1)^k e^{\frac12 k^2t} - \frac12 \Big) \\
&= 2 \Big( \sum_{k=0}^n (-1)^{n-k} e^{\frac12 (k^2-n^2-n-\frac14)t} -
\frac12 (-1)^n  e^{-\frac12 (n+\frac12)^2 t} \Big) \nonumber \\
&= 2 \Big( e^{-\frac12 (n + \frac14)t} \sum_{j=0}^n (-1)^{j} e^{(\frac12 j^2 - jn)t}  -
\frac12 (-1)^n  e^{-\frac12 (n+\frac12)^2 t} \Big) \nonumber 
\end{align}
where we introduced $j=n-k$ in the last line. 
By \eqref{varphi2g} we have $\varphi((n+1)t,t) = \frac12 e^{-\frac12(n+\frac12)t} g((n+\frac12 )t,t)$
such that we get exact evaluations for $\varphi( nt,t)$ with $n\geq 1$. The $n<0$ case is obtained 
using that $g$ is an even function in $z$, which gives $\varphi(-z,t) = 1 - \varphi(z,t)$.

\end{proof}

\begin{proof}[Proof of Proposition~\ref{prop:1}]

This relation follows from the fact that both factors in the
definition of $g(z,t)$, the Gaussian function and the $(\cosh x)^{-1}$ function, 
have the property that they have the same functional form as their own
Fourier transforms. The Fourier transform of $g(z,t)$ is
\begin{eqnarray}
\tilde g(\omega,t) = \int_{-\infty}^\infty e^{i\omega z} g(z,t) = 
\frac{2\pi}{\cosh \pi \omega } e^{-\frac12 \omega^2 t}\,.
\end{eqnarray}
Taking the inverse Fourier transform we have
\begin{eqnarray}
g(y,t) &=& \int_{-\infty}^\infty d\omega e^{-i\omega y} \tilde g(\omega,t) \\
&=& \int_{-\infty}^\infty \frac{d\omega}{\cosh \pi\omega} 
    e^{-\frac12\omega^2 t-i\omega y} \nonumber \\
&=& \frac{1}{2\pi} e^{-\frac{1}{2t}y^2} \int_{-\infty}^\infty \frac{du}
{\cosh(u/2)} e^{-\frac{1}{8\pi^2}(u+2\pi i y)^2} \nonumber \\
&=& \sqrt{\frac{2\pi}{t}} e^{-\frac{1}{2t} y^2} 
g\left( \frac{2\pi y}{it},\frac{4\pi^2}{t}\right)\,.
\nonumber
\end{eqnarray}
This reproduces the relation (\ref{Mordell5g}).

\end{proof}


\begin{proof}[Proof of Proposition~\ref{propPoisson}]
We follow the same approach as in Section 6 in \cite{DP}. The starting point is the Fourier transform of $g(z,t)$ 
\begin{equation}
g(z,t) = \frac{1}{2\pi} \int_{-\infty}^\infty e^{-i\omega z} \tilde g(\omega,t) d\omega
\end{equation}
where $\tilde g(\omega,t)$ was given above in the proof of Proposition \ref{prop:1}
\begin{equation}
\tilde g(\omega,t) = \frac{2\pi}{\cosh \pi \omega} e^{-\frac12 \omega^2 t}\,.
\end{equation}

By the Poisson summation formula we have
\begin{equation}
\sum_{k=-\infty}^\infty g(z+k t,t) = \frac{1}{t} \sum_{n=-\infty}^\infty \tilde g \Big( \frac{2\pi n}{t},t\Big)
e^{\frac{2\pi i n}{t} z}
\end{equation}
All the terms in the sum on the left hand side can be expressed in terms of $g(z,t)$ by repeated application of the recursion relation \eqref{Mordell2g}. Thus we can use this expression to express 
$g(z,t)$ in terms of the sum on the right hand side.
The result can be put into the form
\begin{eqnarray}\label{4.5}
&& \frac12 e^{\frac18 t} \vartheta_4(\frac{i}{2}z, e^{-\frac12 t})
g(z,t) \\
&& = \sum_{j=1}^\infty (-1)^j \frac{\cosh[\frac12 (2j-1)z] \exp[\frac12 (j-j^2)t]}
{\sinh[\frac14 (2j-1) t]}  + \frac{\pi}{t}e^{\frac18 t}
\sum_{k=-\infty}^\infty e^{-\frac{2\pi^2}{t} k^2}
\frac{\cos(\frac{2\pi z k}{t})}
{\cosh (\frac{2\pi^2 k}{t})} \nonumber
\,. 
\end{eqnarray}
where $\vartheta_4(z,q)$ is
one of the Jacobi theta functions given by \cite{WW}
\begin{eqnarray}\label{theta4def}
\vartheta_4(z,q)= \sum_{n=-\infty}^\infty (-1)^n q^{n^2} \exp(2 n i z) 
=1 + 2 \sum_{n=1}^\infty (-q)^{n^2} \cos(2nz)\,.
\end{eqnarray}

The two sums appearing in Eq.~(\ref{4.5}) can be expressed as
\begin{eqnarray}\label{S1def}
S_1 &:=& \sum_{j=1}^\infty (-1)^j \frac{\cosh[\frac12 (2j-1)z] 
\exp[\frac12 (j-j^2)t]}
{\sinh[\frac14 (2j-1) t]} \\
 &=&
2 e^{\frac14 t} \sum_{j=1}^\infty (-1)^j \cosh[ (j-\frac12) z]
\frac{q^{j^2}}{1 - q^{2j-1}} \nonumber \\
 &=&
e^{\frac14 t} \sum_{j=-\infty}^\infty (-1)^j e^{ (j-\frac12) z}
\frac{q^{j^2}}{1 - q^{2j-1}} \,,\quad q = e^{-\frac12 t} \nonumber
\end{eqnarray}
and
\begin{eqnarray}\label{S2def}
S_2 &:=& 
\sum_{k=-\infty}^\infty e^{-\frac{2\pi^2}{t} k^2}
\frac{\cos(\frac{2\pi z k}{t})}
{\cosh (\frac{2\pi^2 k}{t})} 
 =
2 \sum_{j=-\infty}^\infty  \cos\Big( \frac{2\pi j z}{t} \Big)
\frac{q_1^{j^2+j}}{1 + q_1^{2j}} \\
&=& 2\sum_{j=-\infty}^\infty  \exp\Big( \frac{2\pi ij z}{t} \Big)
\frac{q_1^{j^2+j}}{1 + q_1^{2j}}\,,\quad q_1=e^{-\frac{2\pi^2}{t}}\,. \nonumber
\end{eqnarray}

This concludes the proof of the result (\ref{PoissonTheta4}).
The series (\ref{PoissonTheta2}) is obtained from (\ref{PoissonTheta4})
by an application of one of the Jacobi identities for the theta 
functions
\begin{eqnarray}\label{Jacobi4}
\vartheta_4\left(i\frac{z}{2},e^{-\frac12 t}\right) = \sqrt{\frac{2\pi}{t}}
\exp\Big(\frac{z^2}{2t}\Big)
\vartheta_2\left(\frac{\pi z}{t},e^{-\frac{2\pi^2}{t}}\right)\,,
\end{eqnarray}
and using the relation (\ref{Mordell5g}) for $g(z,t)$. The Jacobi theta
function $\vartheta_2(z,q)$ is defined as
\begin{eqnarray}\label{theta2def}
\vartheta_2(z,q)= 2 q^{1/4} \sum_{n=0}^\infty  q^{n^2+n} \cos((2 n + 1) z) \,.
\end{eqnarray}

\end{proof}


\section{Algorithm}
\label{sec:C}

We give in this Appendix a detailed implementation of the algorithm
for the approximation $\bar g(z,t)$ described in Proposition~\ref{properr}.
The algorithm takes as input \textit{gMain(x,t)} which can be computed 
using the series (\ref{PoissonTheta2N}). 

An implementation in R of this algorithm, together with an evaluation of $g(z,t)$ using (\ref{PoissonTheta2}) in the primitive cell $z\in [-t/2,t/2]$ is available at
\texttt{https://github.com/dan-pirjol/logisticNormal}

\begin{algorithm}
\label{alg:1}
\caption{Algorithm for computing the integral $g(x,t)$ by recursion
using Eq.~(\ref{Mordell2g}) from its values in the primitive cell
$z:(-\frac12t,+\frac12t)$.
Takes as input \textit{gMain(x,t)}.}
\label{LNalgo}
\begin{algorithmic}[1]
\STATE $x = \mbox{abs}(x)$

\STATE $k = \mbox{int}(x/t)$
\STATE $x0 = x - k*t$ 

\IF{$x0 > 0.5*t$}

\STATE $x0 = x0 - t$
\STATE $k = k+1$

\ENDIF

\STATE $g0 = \mbox{gMain}(x0,t)$

\STATE $z = x0$
\STATE $g = g0$

\FOR{$j=1$ to $k$}
  \STATE $g = 2*\mbox{exp}(-0.5*z - 0.375*t) - \mbox{exp}(- z - 0.5*t)*g$
  \STATE $z = z + t$
\ENDFOR

\RETURN $g$
\end{algorithmic}
\end{algorithm}


\bibliographystyle{plain}
\bibliography{MordellPaper.bib}
\end{document}